\documentclass[12pt,reqno]{amsart}
 
\usepackage{amsmath,amsthm,amssymb,mathrsfs,amscd,epsfig,color}
\usepackage{url}
\usepackage{graphicx}
\usepackage{mathrsfs}
\usepackage{ulem}
\usepackage{fancybox}
\usepackage{pb-diagram} 
\usepackage[all]{xy}
\usepackage{comment}
\usepackage{mathtools}
\usepackage{centernot}
\usepackage{placeins} % for \FloatBarrier 
\usepackage{url}

\makeatletter

\makeatletter
    \newcommand\contFrac{\@ifstar{\@contFracStar}{\@contFracNoStar}}

   \def\singleContFrac#1#2{%
        \begin{array}{@{}c@{}}%
            \multicolumn{1}{c|}{#1}%
            \\%
            \hline%
           \multicolumn{1}{|c}{#2}%
        \end{array}%
   }

    \def\@contFracNoStar#1{%
        \mathchoice{% * Display style
            \@contFracNoStarDisplay@#1//\@nil%
        }{%           * Text style
            \@contFracNoStarInline@#1//\@nil%
        }{%           * Script style
            \@contFracNoStarInline@#1//\@nil%
        }{%           * Script script style
            \@contFracNoStarInline@#1//\@nil%
        }%
    }

    \def\@contFracNoStarDisplay@#1//#2\@nil{%
        \@ifmtarg{#2}{%
            #1%
        }{%
            #1+\cfrac{1}{\@contFracNoStarDisplay@#2\@nil}%
        }%
    }

        \def\@contFracNoStarInline@#1//#2\@nil{%
            \@ifmtarg{#2}{%
                #1%
            }{%
                #1 \@@contFracNoStarInline@@#2\@nil%
            }%
        }
        \def\@@contFracNoStarInline@@#1//#2\@nil{%
            \@ifmtarg{#2}{%
                + \singleContFrac{1}{#1}%
            }{%
                + \singleContFrac{1}{#1} \@@contFracNoStarInline@@#2\@nil%
            }%
        }

    \def\@contFracStar#1{%
        \mathchoice{% * Display style
            \@contFracStarDisplay@#1////\@nil%
        }{%           * Text style
            \@contFracStarInline@#1//\@nil%
        }{%           * Script style
            \@contFracStarInline@#1//\@nil%
        }{%           * Script script style
            \@contFracStarInline@#1//\@nil%
        }%
    }

    \def\@contFracStarDisplay@#1//#2//#3\@nil{%
        \@ifmtarg{#2}{%
            #1%
        }{%
            #1 + \cfrac{#2}{\@contFracStarDisplay@#3\@nil}%
        }%
    }

        \def\@contFracStarInline@#1//#2\@nil{%
            \@ifmtarg{#2}{%
                #1%
            }{%
                #1 \@@contFracStarInline@@#2\@nil%
            }%
        }
        \def\@@contFracStarInline@@#1//#2//#3\@nil{%
            \@ifmtarg{#3}{%
                + \singleContFrac{#1}{#2}%
            }{%
                + \singleContFrac{#1}{#2} \@@contFracStarInline@@#3\@nil%
            }%
        }
\makeatother

       \numberwithin{equation}{section}
\theoremstyle{plain}
\newtheorem*{thmA}{Main Theorem}

\newtheorem{thm}{Theorem}[section]
\newtheorem{lem}[thm]{Lemma}
\newtheorem{cor}[thm]{Corollary}

\newtheorem{pro}[thm]{Proposition}
\theoremstyle{definition}

\newtheorem{rem}[thm]{Remark}

\newtheorem*{prf*}{Proof}
\newtheorem*{cla*}{Claim}

\newtheorem*{pf*}{}
\newtheorem*{lem*}{LemmaA}
\newtheorem*{lm*}{LemmaB}
\makeatletter
\@namedef{subjclassname@2020}{%
  \textup{2020} Mathematics Subject Classification}
\makeatother
\title[Fractal transference principle for Laurent Series]
{Fractal transference principle \\ for continued fractions of Laurent series}
\author{Yuto Nakajima }

\address{Faculty of Science and Engineering, Doshisha University, Kyoto, 610-0394, JAPAN}
\email{yunakaji@mail.doshisha.ac.jp}

\subjclass[2020]{11K55, 28A80}
\thanks{{\it Keywords}: 
arithmetic progression; continued fraction; Hausdorff dimension; Laurent series}
\begin{document}

\begin{abstract} 

We establish a fractal transference principle for continued fraction
expansions over the field of Laurent series. Let $S$ be an
infinite subset of the set of all polynomials over a finite field of $q$ elements of positive degree with growth density exponent $\alpha \ge 1$, and let $U \subset S$ be a
subset of positive relative upper density. We prove that there exists a subset
$E_{S,U}$ of the set of points whose continued fraction digits are pairwise distinct
and belong to $S$ such that
\[\dim_{\rm H}
E_{S,U}=\frac{1}{2\alpha}.\]
Moreover, the set of digits appearing in the continued fraction expansions of
points in $E_{S,U}$ recovers the relative upper density of $U$ in $S$. We also show
that the same construction preserves the relative upper density of the
corresponding degree sets in $\mathbb N$. As a consequence, combinatorial
statements for subsets of $\mathbb N$ of positive upper density can be transferred
to degree sets arising from continued fraction expansions of Laurent series on sets
of optimal Hausdorff dimension.
\end{abstract}

\maketitle

\section{Introduction}\label{intro}
The existence of patterns, especially arithmetic progressions (APs), in subsets of the natural numbers has long been a central theme. 
A foundational result in this direction is Van der Waerden’s theorem \cite{Wae27}, which was later strengthened by Szemer\'edi \cite{S75}, who proved that every subset of $\mathbb{N}$ with positive upper density contains arithmetic progressions of arbitrary length. 
Furstenberg \cite{Fur77,Fur81} reinterpreted this phenomenon in ergodic theoretic terms via the correspondence principle and multiple recurrence, thereby giving a new proof of Szemer\'edi’s theorem. More remarkably, Green and Tao \cite{GT08} showed that even the prime numbers contain arithmetic progressions of arbitrary length.

Within fractal dimension theory, analogous questions have been investigated through number-theoretic expansions. In the setting of regular continued fractions, each irrational number $x \in (0,1)\setminus\mathbb{Q}$ admits an expansion
\[
x=\cfrac{1}{a_{1}(x)+\cfrac{1}{a_{2}(x)+\cfrac{1 }{a_{3}(x)+\ddots}}},\]
where $a_n(x)\in\mathbb{N}$ for $n\ge 1$ are called the partial quotients. Define
\[J=\{x\in(0,1)\setminus\mathbb{Q}\colon a_n(x)<a_{n+1}(x)\ \text{for every } n\ge 1\}.\]
Since the map
$x \mapsto \{a_n(x)\colon n\in\mathbb{N}\}$
induces a bijection between $J$ and the family of infinite subsets of $\mathbb{N}$, the set $J$ provides a natural framework in which to study combinatorial properties of subsets of $\mathbb{N}$ via continued fractions.
Ramharter \cite{R85} proved that $J$ has Hausdorff dimension $1/2$, refining Good’s classical theorem \cite[Theorem~1]{G}, which states that the set
\[\{x\in(0,1)\setminus\mathbb{Q}\colon a_n(x)\to\infty\ \text{ as } \ n\to\infty\}\]
also has Hausdorff dimension $1/2$. Tong and Wang \cite{TW} initiated the study of APs in continued fractions by proving that the set of points $x\in J$ for which the set $\{a_n(x)\colon n\in\mathbb{N}\}$ of partial quotients contains arithmetic progressions of arbitrary length and arbitrary common difference has Hausdorff dimension $1/2$, the same as that of $J$. Subsequently, Zhang and Cao \cite{ZC2} proved that the set of points $x\in J$ for which the set of partial quotients has upper Banach density $1$ has Hausdorff dimension $1/2$. We also refer to \cite{NTW} for quantitative results concerning arithmetic progressions in continued fractions.

Motivated by these works, the author and Takahasi \cite{NT} considered the larger set
\[E=\{x\in(0,1)\setminus\mathbb{Q}\colon a_m(x)\ne a_n(x)\ \text{for all } m\ne n\}.\]
For each $x\in E$, the set $\{a_n(x)\colon n\in\mathbb{N}\}$ can still be viewed naturally as an infinite subset of $\mathbb{N}$. Let $(\ast)$ denote a combinatorial property that is satisfied by every subset of $\mathbb{N}$ with positive upper density, such as the existence of APs of arbitrary length \cite{S75}. The {\it fractal transference principle} asserts that for every subset $S\subset\mathbb{N}$ with positive upper density, there exists a set $E_S\subset E$ of Hausdorff dimension $1/2$ such that, for every $x\in E_S$, the intersection $\{a_n(x)\colon n\in\mathbb{N}\}\cap S$ inherits the property $(\ast)$. In this way, the principle builds a bridge between density combinatorics and the fractal dimension theory of continued fractions. For the fractal transference principle for other kinds of expansions, see \cite{GOQZ, NT2}. The aim of this paper is to establish an analogue of this principle for continued fraction expansions over the field of Laurent series.

\subsection{Continued fractions of Laurent series}
 We now recall the continued fraction
expansion over the field of Laurent series and fix the notation used throughout the
paper. Let $\mathbb{F}_q$ be a finite field of $q$ elements, and let
$\mathbb{F}_q((X^{-1}))$ denote the field of formal Laurent series, namely
\[
\mathbb{F}_q((X^{-1}))=
\left\{
\sum_{i=n_0}^{\infty} c_i X^{-i} \colon n_0\in\mathbb{Z},\ c_i\in\mathbb{F}_q
\right\}.
\]

Let $\mathbb{F}_q[X]$ denote the ring of polynomials over $\mathbb{F}_q$ and
let $\mathbb{F}_q(X)$ denote the field of rational functions over $\mathbb{F}_q$,
that is, the field of fractions of $\mathbb{F}_q[X]$. Equivalently,
\[\mathbb{F}_q(X)=
\left\{\frac{P(X)}{Q(X)} \colon P(X),Q(X)\in \mathbb{F}_q[X], Q(X)\neq 0\right\}.\]
Identifying each rational function with its Laurent expansion in $X^{-1}$ at infinity, we regard $\mathbb{F}_q(X)$ as a subfield of
$\mathbb{F}_q((X^{-1}))$.

For a nonzero element $x\in \mathbb{F}_q((X^{-1}))$, define $\deg x={-\inf\{n\in\mathbb{Z}\colon c_n\neq 0\}}$ and 
\[
\|x\|_q:=q^{\deg x}, \ \|0\|_q:=0.
\]
Then $\|\cdot\|_q$ is a non-Archimedean absolute value on
$\mathbb{F}_q((X^{-1}))$, and $\mathbb{F}_q((X^{-1}))$ is complete with respect to the induced metric. Let
\[\Delta=\{x\in \mathbb{F}_q((X^{-1}))\colon \|x\|_q<1\}.\]

For
$x=\sum_{i=n_0}^{\infty} c_i X^{-i}\in \mathbb{F}_q((X^{-1})),$
write
$[x]:=\sum_{i=n_0}^{0} c_i X^{-i}$
for the integral part of $x$. Following Artin \cite{Ar}, define the map $T:\Delta\to \Delta$ by
\[
T(x):=\frac{1}{x}-\left[\frac{1}{x}\right]\ (x\neq 0),\ T(0):=0.
\]
This map gives the regular continued fraction algorithm over the field of Laurent series.  Let 
\[
\mathbb{F}_q^{1}[X]=
\left\{
A\in \mathbb F_q[X] \colon \deg A\ge 1
\right\}.
\]
For $x\in \Delta$ define 
\begin{equation}\label{LSCF}A_n(x)=\left[\frac{1}{T^{n-1}(x)}\right]\in \mathbb{F}_q^{1}[X] \ \text{if}\ T^{n-1}(x)\neq 0\end{equation}
for each $n\ge 1.$
We call $A_n(x)$ the $n$th {\it digit} of $x$ whenever it is defined.
Then every $x\in \Delta \setminus \mathbb F_q(X)$ admits a unique infinite continued fraction expansion of Laurent series
\begin{equation}\label{LSCF2}x=\cfrac{1}{A_{1}(x)+\cfrac{1}{A_{2}(x)+\cfrac{1 }{A_{3}(x)+\ddots}}}.\end{equation}
On the other hand, the continued fraction expansion of $x$ is finite if and only if $x \in \Delta \cap \mathbb{F}_q(X)$.
See \cite{BN} for more details.

Metric and fractal properties of these expansions have been studied extensively;
see \cite{BN, FSZ24, HH, HHY, HS, HW, HWWY, Wu, Wu2}. In the specific direction of APs, Hu and Hu \cite{HH} proved that the set of points $x\in \Delta \setminus \mathbb F_q(X)$ whose degree sequence $(\deg A_n(x))_{n=1}^{\infty}$ is strictly increasing and whose degree set
$\{\deg A_n(x)\colon n\in\mathbb{N}\}$
contains arithmetic progressions of arbitrary length and arbitrary common difference has Hausdorff dimension $1/2$. This is the Laurent series analogue of \cite{TW}.

\subsection{Main result}\label{state-sec}
We now state the main result of this paper. In order to formulate it, we need to
distinguish between two notions of density. One is a density on subsets of
$\mathbb F_q^1[X]$, measured by counting polynomials of degree at most $N$.
The other is a density on subsets of $\mathbb N$, which will be applied to the
corresponding degree sets. The theorem below shows that our construction
simultaneously preserves both kinds of density.

Given two infinite sets $B \subset G \subset \mathbb N$, define the upper density of
$B$ relative to $G$ by
\[
\overline{d}(B | G)=\limsup_{N\to\infty}\frac{\#(B\cap [1,N])}{\#(G\cap [1,N])}.
\]
For $N \in \mathbb N$ and $S \subset \mathbb F_q^1[X]$, set
\[
Q_N(S)=S\cap \mathbb F_q^{1,N}[X],
\]
where $\mathbb F_q^{1,N}[X]:=\{A\in \mathbb F_q^{1}[X]\colon \deg A\le N\}.$
Given two infinite sets $U \subset S \subset \mathbb F_q^1[X]$, define the upper
density of $U$ relative to $S$ by
\[
\overline{d}_q(U| S)=\limsup_{N\to\infty}\frac{\#Q_N(U)}
{\#Q_N(S)}.
\]
When $G=\mathbb N$, we simply write $\overline{d}(B):=\overline{d}(B| \mathbb N)$.
Likewise, when $S=\mathbb F_q^1[X]$, we write
$\overline{d}_q(U):=\overline{d}_q(U| \mathbb F_q^1[X])$.

We say that $S \subset \mathbb F_q^1[X]$ has {\it growth density} if there exist constants $\gamma_1,\gamma_2>0$,
$\alpha\ge 1$, and $\beta\ge 0$ such that for all sufficiently large $N$,
\[
\gamma_1 \frac{q^{N/\alpha}}{N^\beta}
\le \# Q_N(S)
\le
\gamma_2 \frac{q^{N/\alpha}}{N^\beta}.
\]
In this case, we also say that $S$ has growth density with exponent $\alpha$.

For $S\subset \mathbb F_q^1[X]$, write
\[
\deg(S):=\{\deg A\colon A\in S\}.
\]
We also set
\[
E=\{x\in \Delta \setminus \mathbb F_q(X)\colon A_m(x)\neq A_n(x)\text{ for all }m\neq n\}.
\]
Let $\dim_{\rm H}$ denote the Hausdorff dimension on $\Delta$.
The following theorem is the main result of the paper.

\begin{thmA}
Let $S$ be an infinite subset of $\mathbb F_q^1[X]$ that has growth density with exponent $\alpha\geq1$.
For any $U\subset S$ with $\overline{d}_q(U|S)>0$, 
there exists a subset $E_{S,U}$ of $\{x\in E \colon \{A_n(x)\colon n\in\mathbb N\}\subset  S \}$ such that \begin{equation*}\dim_{\rm H}E_{S,U}=\dim_{\rm H}\{x\in E\colon \{A_n(x)\colon n\in\mathbb N\}\subset S\}=\frac{1}{2\alpha},\end{equation*} 
\begin{equation*}\overline{d}_q\left(\bigcup_{n\in\mathbb N}\bigcap_{x\in E_{S,U}}\{A_n(x)\}\cap U\middle|S\right)=\overline{d}_q(U|S),\end{equation*}
and 
\begin{equation*}\overline{d}\left(\bigcup_{n\in\mathbb N}\bigcap_{x\in E_{S,U}}\{\deg A_n(x)\}\cap \deg(U)\middle|\deg(S)\right)=\overline{d}(\deg(U)|\deg(S)).\end{equation*}
In particular, 
\begin{equation*}\dim_{\rm H}\left\{
\begin{tabular}{l}
\!\!\!$x\in E\colon$\\
\!\!\!$\{A_n(x)\colon n\in\mathbb N\}\subset S$,\!\!\!\\
\!\!\!$\overline{d}_q(\{A_n(x)\colon n\in\mathbb N\}\cap U |S)=\overline{d}_q(U|S),$
\!\!\!\\
\!\!\!$\overline{d}(\{\deg A_n(x)\colon n\in\mathbb N\}\cap \deg(U) |\deg(S))$\!\!\!\\
\!\!\!$=\overline{d}(\deg(U)|\deg(S))$\!\!\!
\end{tabular}
\right\}=\dim_{\rm H}
\left\{\begin{tabular}{l}
\!\!\!$x\in E\colon$\\
\!\!\!$\{A_n(x)\colon n\in\mathbb N\}\subset S$\!\!\!\end{tabular}
\right\}.\end{equation*}
\end{thmA}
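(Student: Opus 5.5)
The plan has two parts: an upper bound for the full set $F_S:=\{x\in E\colon \{A_n(x)\colon n\in\mathbb N\}\subset S\}$, and a Cantor-type construction of $E_{S,U}\subset F_S$ attaining it. The construction contains an explicit infinite set of \emph{forced} digits, which is taken from $U$ and has the required relative upper densities. Both parts rest on the geometry of cylinders. For digits $A_1,\dots,A_n$, the cylinder $I(A_1,\dots,A_n)$ is a ball of diameter $q^{-2\sum_{i\le n}\deg A_i}$ (up to a constant factor $q^{-1}$). This is exact in the non-Archimedean setting, since $\|Q_n\|_q=q^{\sum\deg A_i}$ with no distortion. This exactness makes the whole argument a symbolic computation.

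\textbf{Upper bound.} Since digits of $x\in F_S$ are pairwise distinct and lie in $S$, for each $n$ the first $n$ digits are distinct elements of $S$, and so $\deg A_n(x)\to\infty$. Cover $F_S$ at level $n$ as follows: for every choice of distinct $A_1,\dots,A_{n-1}\in S$, take the union over $A_n\in S$ with $\deg A_n\ge M$ of the cylinders $I(A_1,\dots,A_n)$. This union sits inside a ball of diameter about $q^{-2\sum_{i<n}\deg A_i-2M}$. A cleaner estimate uses the Good-type argument. The $s$-cost of one digit is
\[
\sum_{A\in S,\deg A\ge M} q^{-2s\deg A}\asymp \sum_{N\ge M}\big(\#Q_N(S)-\#Q_{N-1}(S)\big)q^{-2sN}.
\]
By summation by parts and the growth density bound $\#Q_N(S)\le\gamma_2q^{N/\alpha}N^{-\beta}$, this is finite and tends to $0$ as $M\to\infty$ whenever $s>1/(2\alpha)$. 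Summing products over tails with $\deg A_i\ge M$ (for $x\in F_S$, after finitely many digits all degrees exceed $M$, and a countable union does not change dimension), the total $s$-cost of level-$n$ cylinders is at most $c_M^n$ with $c_M<1$. Hence $\dim_{\rm H}F_S\le 1/(2\alpha)$.

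\textbf{Construction and lower bound.} Choose a rapidly increasing sequence of degrees $N_1<N_2<\cdots$ along which $\#Q_{N_k}(U)/\#Q_{N_k}(S)\to\overline d_q(U|S)$. Choose a second sequence along which $\#(\deg U\cap[1,N])/\#(\deg S\cap[1,N])$ attains $\overline d(\deg U|\deg S)$, and interleave the two. Let the forced set $\mathcal U$ be the union of $Q_{N_k}(U)\setminus Q_{N_{k-1}}(U)$ over the interleaved scales. For the degree statement, also include in $\mathcal U$ one polynomial of $U$ of each degree in $\deg(U)\cap(N_{k-1},N_k]$. The set $\mathcal U$ is enumerated in increasing degree as $u_1,u_2,\dots$. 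The set $E_{S,U}$ consists of those $x$ whose digit sequence is a block concatenation: block $j$ consists of $\ell_j$ \emph{free} digits, followed by the forced digit $u_j$. The free digits are chosen from $S\setminus\mathcal U$ with degrees in a window $[D_j,(1+\epsilon)D_j]$, pairwise distinct and distinct from earlier digits, where $D_j\to\infty$ is chosen fast relative to $\deg u_j$. Because every $u_j$ appears in every $x\in E_{S,U}$, the set $\bigcup_n\bigcap_{x}\{A_n(x)\}\cap U$ contains $\mathcal U$. Its relative density along $N_k$ is then $\ge \overline d_q(U|S)-o(1)$, and the reverse inequality is trivial. The degree density statement follows in the same way. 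For the dimension, the number of admissible free digits of degree about $D$ is $\gtrsim q^{D/\alpha}D^{-\beta}$, since $\mathcal U$ and earlier digits remove only a negligible portion when $D_j$ grows fast. Put on $E_{S,U}$ the measure $\mu$ distributing mass uniformly among free choices, with forced digits costing nothing in mass. Then $\mu(I_n)\approx \prod(\#\text{choices})^{-1}\approx q^{-\sum\deg A_i/\alpha}$ up to subexponential factors. The forced digits contribute to $|I_n|$ only $\sum_j\deg u_j$, which is negligible against $\sum D_j\ell_j$ if $\ell_j$ or $D_j$ grows fast. The mass distribution principle, applied to balls (which are cylinders up to the level-$n$ subdivision), then gives $\dim_{\rm H}E_{S,U}\ge 1/(2\alpha)-O(\epsilon)$. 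Letting $\epsilon\to0$ requires either a union over $\epsilon$ or a choice of $\epsilon_j\to0$ inside one construction; I will use $\epsilon_j\to0$. The final ``in particular'' statement follows because $E_{S,U}$ lies in the displayed set, and that set lies in $F_S$.

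\textbf{Main obstacle.} The hardest step is the mass distribution estimate for balls whose size falls between consecutive cylinder levels. The step from level $n-1$ to level $n$ shrinks the diameter by the factor $q^{-2\deg A_n}$, which is huge. A ball in between meets many level-$n$ cylinders, and we must count how many have degree $\ge m$. I will handle this with the standard Good/Ramharter trick. The key input is that, inside $I(A_1,\dots,A_{n-1})$, the sub-cylinders with $\deg A_n=m$ fill shells determined by the leading coefficients. So a ball of radius $r$ contains about $\#\{A\colon\deg A\ge \log_q(\cdot)\}$ of them, and the growth-density lower bound $\gamma_1q^{N/\alpha}N^{-\beta}$ then yields $\mu(B)\lesssim |B|^{1/(2\alpha)-\epsilon}$. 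This is also where $\alpha\ge1$ enters: with $\alpha\ge1$, $S$ is no denser than all polynomials, which keeps the counting consistent.
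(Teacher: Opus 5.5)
Your upper bound is fine: it is the Good-type covering argument behind Lemma~\ref{HirstLau}, combined with $\tau(S)\le 1/(2\alpha)$. The lower-bound construction has a genuine gap.

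The union $\bigcup_k\bigl(Q_{N_k}(U)\setminus Q_{N_{k-1}}(U)\bigr)$ telescopes, so your forced set $\mathcal U$ is all of $U$, up to finitely many elements. The free digits must therefore come from $S\setminus U$. They cannot come from $U$, because every element of $\mathcal U$ occurs in every point and distinctness would fail.

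Your claim that $\mathcal U$ removes only a negligible portion is false. $U$ has positive relative upper density and may equal $S$, or equal $S$ minus a set with exponentially fewer than $q^{D/\alpha}$ elements of each degree $D$. Then there are no free digits, or far too few, and the mass estimate collapses.

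What is missing is the paper's reservoir. Take $S_*\subset S\setminus U_{\deg}$, where $U_{\deg}$ contains one element of $U$ of each degree. Require $\overline d_q(S_*|S)=0$, but with only a polynomial loss in every degree window: $\#\{A\in S_*\colon (2n)^t\le \deg A<(2n+1)^t\}\ge c_0q^{(2n+1)^t/\alpha}n^{-\rho}$. This is Lemmas~\ref{find-C} and~\ref{Cardi}, built from the factorially sparse index set $P$. Free digits are drawn only from $S_*$, and the forced set is $U\setminus S_*$. That set has the same relative upper density as $U$, and $\deg(U\setminus S_*)=\deg(U)$ because $U_{\deg}\cap S_*=\emptyset$. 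Your extra ``one polynomial of each degree'' is exactly $U_{\deg}$, but it only matters once such a reservoir has been carved out.

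Your treatment of balls between consecutive levels also fails in this setting. Take $\deg A=\deg A'=m$ and a common prefix with degree sum $\Sigma$. Points in the sub-cylinders with last digits $A$ and $A'$ are at distance exactly $q^{-2\Sigma}q^{\deg(A-A')-2m}$. So such a ball contains the sub-cylinders whose digit agrees with $A$ in its top coefficients, not those of large degree. Growth density gives no control over how $S$ is spread over these coefficient classes.

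For example, $S=\{X^m+B\colon m\ge1,\ \deg B<m/2\}$ has exponent $\alpha=2$. Yet all its degree-$m$ cylinders lie in one ball of radius about $q^{-3m/2}$. A direct covering then shows that your set has Hausdorff dimension $0$ whenever $D_j/\sum_{i<j}\ell_iD_i\to\infty$.

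The paper avoids this with the windows $[(2n)^t,(2n+1)^t)$. These give $\log|I_{n+1}|/\log|I_n|\to1$, so the inclusion $B(x,r)\subset I(A_1(x),\dots,A_n(x))$ suffices. In your scheme you would need $D_j=o\bigl(\sum_{i<j}\ell_iD_i\bigr)$, with the growth carried by $\ell_j$, rather than a counting argument.
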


Main Theorem  asserts
that for every relatively dense subset $U$ of a set $S$, one can
construct a large fractal set of $\Delta\setminus \mathbb F_q(X)$ whose digits lie in $S$ and
which recovers both the relative density of $U$ and the relative density of
its degree set.
As a direct consequence of Main Theorem, we obtain the following corollary by
specializing to the full digit set $S=\mathbb F_q^1[X]$.

\begin{cor}\label{cor-FS}
Let $S\subset\mathbb F_q^1[X]$. 
 If $\overline{d}_q(S)>0$, then there exists $E_S\subset E$ 
such that \begin{equation*}\dim_{\rm H}E_S=\dim_{\rm H}E=\frac{1}{2}\ \text{ and }\ \overline{d}_q\left(\bigcup_{n\in\mathbb N}\bigcap_{x\in E_S}\{A_n(x)\}\cap S\right)=\overline{d}_q(S),\end{equation*} and 
\begin{equation*}\overline{d}\left(\bigcup_{n\in\mathbb N}\bigcap_{x\in E_{S}}\{\deg A_n(x)\}\cap \deg(S)\right)=\overline{d}(\deg(S)).\end{equation*}
In particular, 
\begin{equation*}\dim_{\rm H}\left\{
\begin{tabular}{l}
\!\!\!$x\in E\colon$\\
\!\!\!$\overline{d}_q(\{A_n(x)\colon n\in\mathbb N\}\cap S)=\overline{d}_q(S),$
\!\!\!\\
\!\!\!$\overline{d}(\{\deg A_n(x)\colon n\in\mathbb N\}\cap \deg(S) )$\!\!\!\\
\!\!\!$=\overline{d}(\deg(S))$\!\!\!
\end{tabular}
\right\}=\dim_{\rm H}E.\end{equation*}

\end{cor}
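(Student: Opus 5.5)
The corollary follows by applying the Main Theorem with $\mathbb F_q^1[X]$ playing the role of the ambient set $S$, and with the given set $S$ playing the role of $U$. I will therefore write $S_0:=\mathbb F_q^1[X]$ and $U:=S$.

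First I check that $S_0$ has growth density with exponent $\alpha=1$. For $N\ge1$ the number of polynomials of degree exactly $k$ is $(q-1)q^k$. Hence
\[
\#Q_N(S_0)=\sum_{k=1}^{N}(q-1)q^k=q^{N+1}-q .
\]
For all $N\ge 1$ this lies between $q^N$ and $q\cdot q^N$. So the defining inequality holds with $\gamma_1=1$, $\gamma_2=q$, $\alpha=1$ and $\beta=0$. Also $S_0$ is infinite. The hypothesis $\overline{d}_q(S)>0$ is exactly $\overline{d}_q(U|S_0)>0$, so the Main Theorem applies to the pair $(S_0,U)$.

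The Main Theorem then gives a set $E_{S_0,U}$, contained in $\{x\in E\colon \{A_n(x)\}\subset S_0\}$, whose Hausdorff dimension equals that of this set and equals $1/(2\alpha)=1/2$. Every digit $A_n(x)$ lies in $\mathbb F_q^1[X]$ by \eqref{LSCF}, so the containing set is all of $E$. Setting $E_S:=E_{S_0,U}$ therefore gives $E_S\subset E$ and $\dim_{\rm H}E_S=\dim_{\rm H}E=1/2$.

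The two density identities are the Main Theorem's conclusions with $S_0$ in place of $S$. By definition $\overline{d}_q(\,\cdot\,|S_0)=\overline{d}_q(\cdot)$. For the degree statement, $\deg(S_0)=\mathbb N$, since every positive integer is the degree of some polynomial in $S_0$. Hence $\overline{d}(\,\cdot\,|\deg(S_0))=\overline{d}(\cdot)$, which is the stated form.

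For the ``in particular'' clause, fix $x\in E_S$. For each $n$ the digit $A_n(x)$ belongs to $\bigcap_{y\in E_S}\{A_n(y)\}$ whenever that intersection is nonempty. Therefore
\[
\bigcup_n\bigcap_{y\in E_S}\{A_n(y)\}\cap S\subset\{A_n(x)\colon n\in\mathbb N\}\cap S\subset S .
\]
By monotonicity of $\overline{d}_q$, this forces $\overline{d}_q(\{A_n(x)\}\cap S)=\overline{d}_q(S)$. The same argument applied to degrees gives $\overline{d}(\{\deg A_n(x)\}\cap\deg(S))=\overline{d}(\deg(S))$.

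So $E_S$ is contained in the set displayed in the corollary, which in turn is contained in $E$. Monotonicity of Hausdorff dimension then shows that this set has dimension $\dim_{\rm H}E$. Every step is routine; the only point that needs care is checking that $\mathbb F_q^1[X]$ satisfies the growth density hypothesis with $\alpha=1$, which the computation above settles.
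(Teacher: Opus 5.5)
Your proposal is correct and follows the paper's route: apply the Main Theorem with ambient set $\mathbb F_q^1[X]$, which has growth density with exponent $1$ because $\#Q_N(\mathbb F_q^1[X])=q^{N+1}-q$. Then use $\overline{d}_q(\cdot\,|\,\mathbb F_q^1[X])=\overline{d}_q(\cdot)$ and $\deg(\mathbb F_q^1[X])=\mathbb N$. Your monotonicity argument for the ``in particular'' clause simply spells out the corresponding clause of the Main Theorem.
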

It provides a
direct mechanism for transferring combinatorial statements about subsets of $\mathbb N$ with positive upper density to statements about degree sets arising from
continued fraction expansions of Laurent series on sets of Hausdorff dimension
$1/2$.
For instance, combining Corollary~\ref{cor-FS} with Szemer\'edi's theorem \cite{S75}, we obtain the following: if
$S\subset \mathbb F_q^1[X]$ has positive upper density and $\deg(S)\subset \mathbb N$ also has positive upper density, then there exists
a subset of $E$ of Hausdorff dimension $1/2$ such that
for each $x$ in the subset, the degree set arising from its continued fraction expansion contains arithmetic progressions of arbitrary length inside $\deg(S)$.

We proceed to an important example of uniform transfer via Main Theorem.  Let
\[\mathcal{I}_q:=\{P\in\mathbb{F}_q^1[X]\colon P \text{ is irreducible}\}.\]
L\^e \cite{Le} established a function-field analogue of the Green--Tao theorem \cite{GT08},
showing that every subset of $\mathcal{I}_q$ of positive relative upper density contains
affine configurations of arbitrarily large dimension. More precisely, the following holds.

\begin{thm}[{\cite[Theorem~2]{Le}}]\label{BL-thm} 
Suppose that $U\subset \mathcal{I}_q$ satisfies
$\overline{d}_q(U| \mathcal{I}_q)>0.$
Then for every $k\in \mathbb{N}$, there exist $F,G\in \mathbb{F}_q[X]$ with $G\neq 0$ such that
\[
F+AG\in U
\qquad
\text{for every } A\in \mathbb{F}_q[X] \text{ with } \deg A<k.
\]

\end{thm}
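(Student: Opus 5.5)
The statement is L\^e's function-field analogue of the Green--Tao theorem, so I would follow the Green--Tao transference scheme in $\mathbb F_q[X]$. Throughout, the ``integers'' are the polynomials, the intervals are the finite $\mathbb F_q$-vector spaces $G_N=\{A\colon \deg A<N\}$ of size $q^N$, and the target pattern is the affine configuration $\{F+AG\colon \deg A<k\}$. The plan has four steps in order: a dense model theorem, a $W$-trick, a pseudorandom majorant, and the transference.

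\emph{Step 1: the dense model theorem.} First I would record the statement for dense sets. If $D\subset G_N$ has $\#D\ge\delta q^N$ and $N$ is large in terms of $\delta,k,q$, then $D$ contains $\{F+AG\colon \deg A<k\}$ with $G\neq 0$. This follows from the Furstenberg--Katznelson multidimensional Szemer\'edi theorem applied to the $\mathbb F_q$-vector space $G_N$, or from density Hales--Jewett. I would use it in its functional form: for $0\le g\le 1$ with $\mathbb E g\ge\delta$, the average of $\prod_{\deg A<k} g(F+AG)$ over all $F,G$ is at least $c(\delta,k)>0$. The degenerate terms with $G=0$ contribute only $O(q^{-N})$.

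\emph{Step 2: the $W$-trick.} Let $W$ be the product of all irreducibles of degree at most $w$, with $w\to\infty$ slowly. Since $\overline d_q(U|\mathcal I_q)>0$ and the prime polynomial theorem gives $\#\{P\in\mathcal I_q\colon \deg P=n\}\sim q^n/n$, pigeonholing produces infinitely many $N$ and a residue $b$ coprime to $W$ for which $U_b=\{A\colon WA+b\in U,\ \deg A<N\}$ has relative density $\gg\delta$ inside the $W$-tricked primes. Define $f(A)=\frac{\phi(W)}{\|W\|_q}\,N\,1_{U_b}(A)$ on $G_N$; it has mean $\gg\delta$.

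\emph{Step 3: the pseudorandom majorant.} I would build $\nu\ge c f$ from the Goldston--Y\i ld\i r\i m truncated divisor sum
\[
\Lambda_R(A)=\sum_{D\mid A,\ \deg D<R}\mu(D)\,(R-\deg D),
\]
with $D$ monic and $R=\epsilon N$, setting $\nu(A)\propto\Lambda_R(WA+b)^2$. I then need two estimates. The \emph{linear forms condition} says averages of $\prod_i\nu(\psi_i(\mathbf x))$ over systems of affine forms are $1+o(1)$. The \emph{correlation condition} says averages of $\prod_j\nu(A+h_j)$ are bounded by $\sum_{i<j}\tau(h_i-h_j)$ with $\tau$ having bounded moments. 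Both reduce to contour-integral computations with the zeta function $\zeta_{\mathbb F_q[X]}(s)=(1-q^{1-s})^{-1}$, which is rational. So the Euler-product and contour estimates are exact, and they are simpler than over $\mathbb Z$.

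\emph{Step 4: transference and conclusion.} Pseudorandomness of $\nu$ gives a Gowers-type norm control adapted to the configuration. Here one uses the box norm or generalized von Neumann theorem for the forms $F+AG$, which needs only finitely many applications of Cauchy--Schwarz, each absorbed by the linear forms condition. Via the dense model theorem (Green--Tao, or Reingold--Trevisan--Tulsiani--Vadhan) one obtains $0\le g\le 1$ with $\|f-g\|$ small in that norm. Step 1 applied to $g$ then gives $\gg c(\delta,k)$ configurations weighted by $f$. Finally the trivial configurations with $G=0$ are removed using $f\le\nu$ and the correlation condition.

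I expect the \textbf{main obstacle} to be the generalized von Neumann step for the multidimensional pattern $\{F+AG\colon \deg A<k\}$. This pattern involves $q^k$ forms in two variables, which are \emph{not} in general position over $\mathbb F_q[X]$ in the naive sense. My first attempt would be to reparametrize $G$ and $A$ coordinatewise over $\mathbb F_q$, writing the pattern as a corner-type configuration in $G_N^{k}$. I would then apply the Cauchy--Schwarz argument of the Tao--Ziegler/Cook--Magyar type for multidimensional patterns, checking that the resulting linear-form systems have pairwise independent forms so that the linear forms condition applies.
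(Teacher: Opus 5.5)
The paper does not prove this statement; it quotes it from L\^e \cite{Le} and uses it only as input to the Main Theorem. Your Steps 1--4 ($W$-trick, the majorant built from $\Lambda_R$ with cutoff $R-\deg D$, linear forms and correlation conditions, a dense model plus a Szemer\'edi-type theorem over $\mathbb F_q[X]$) are the Green--Tao transference that L\^e carries out, so the architecture is right.

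\textbf{Your final paragraph is wrong.} The $q^k$ forms $\psi_A(F,G)=F+AG$ with $\deg A<k$ \emph{are} in general position. Indeed $\psi_A-\psi_{A'}=(A-A')G$ with $A-A'\neq 0$ of degree $<k$. So the forms are pairwise independent over $\mathbb F_q[X]$, and they stay independent modulo every irreducible of degree $\ge k$, hence modulo every prime not dividing $W$ once $w\ge k$. The pattern is therefore a $q^k$-term progression with distinct multipliers, and the one-dimensional Green--Tao generalized von Neumann argument goes through unchanged:
\begin{itemize}
\item Substitute $F=\sum_A A\,y_A$ and $G=-\sum_A y_A$. Then $F+A'G=\sum_{A\neq A'}(A-A')\,y_A$ does not involve $y_{A'}$.
\item $q^k-1$ applications of Cauchy--Schwarz bound the weighted count by the $U^{q^k-1}$ norm of any single function.
\item The final change of variables needs only that multiplication by $A-A'$ is bijective. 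This holds in $\mathbb F_q[X]/(Q)$ for $Q$ irreducible of degree $N'\ge N+k$.
\item Wrap-around costs nothing. Since $A=0$ and $A=1$ both occur, if $f$ is supported on $\{\deg<N\}$ then $F$ and $F+G$ both lie there. So $\deg G<N$, and no $F+AG$ is reduced mod $Q$.
\end{itemize}
Your proposed reparametrization into corner-type configurations in $G_N^k$ via Tao--Ziegler or Cook--Magyar is therefore unnecessary. As sketched, it does not even encode the pattern: $F+AG=F+\sum_i a_iX^iG$ ties the $k$ generators $G,XG,\dots,X^{k-1}G$ together, and a corner configuration in $G_N^k$ cannot see that constraint. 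Drop it and use the argument you already describe in Step~4.

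\textbf{A smaller point in Step~1.} The multidimensional Szemer\'edi theorem for $\mathbb Z^d$-actions, applied to $G_N$ as an $\mathbb F_q$-vector space, cannot produce dilation by a polynomial $G$. You need either the module/IP version (cf.\ \cite{BLM}) or density Hales--Jewett over the alphabet $\{\deg A<k\}$. For the latter, use $G_{kn}\cong (G_k)^n$ via $F\mapsto \sum_j F_jX^{kj}$; a combinatorial line with wildcard set $S$ is then exactly $\{F+AG\colon \deg A<k\}$ with $G=\sum_{j\in S}X^{kj}$. Likewise, your averaged form ``over all $F,G$'' must be taken over $(F,G)$ in the ring $\mathbb F_q[X]/(Q)$, or with $\deg G$ restricted, since otherwise $AG$ leaves $G_N$.
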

For related results, see \cite{BLM}.
Since $\mathcal I_q$ has growth density with exponent $1$ (see \cite{LN, We}), by transferring Theorem~\ref{BL-thm} and Szemer\'edi's theorem \cite{S75} via Main Theorem we obtain the following theorem. 
\begin{thm}\label{cor-N}

Let $U \subset \mathcal{I}_q$ satisfy $\overline{d}_q(U | \mathcal{I}_q) > 0$ and $\overline{d}(\deg(U))>0$. Then there exists a set
$E_{\mathcal{I}_q, U} \subset E$ such that
\[\dim_{\rm H} E_{\mathcal{I}_q, U} = \frac{1}{2}\] and the following holds:
\begin{itemize}
\item for every $x \in E_{\mathcal{I}_q, U}$ and every $k \in \mathbb{N}$, the set
$\{A_n(x)\colon n \in \mathbb{N}\} \cap U$
contains an affine configuration of the form
\[\{F + AG \colon A \in \mathbb{F}_q[X],\ \deg A < k\}\]
for some $F,G \in \mathbb{F}_q[X]$ with $G \neq 0$;  
\item for every
$x \in E_{\mathcal{I}_q, U}$, the degree set
\[\{\deg A_n(x) \colon n \in \mathbb{N}\} \cap \deg(U)\]
contains arithmetic progressions of arbitrary length.
\end{itemize}
\end{thm}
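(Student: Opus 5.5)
We will obtain Theorem~\ref{cor-N} by applying Main Theorem with $S=\mathcal I_q$ and then passing the two density statements through Theorem~\ref{BL-thm} and Szemer\'edi's theorem. By the prime polynomial theorem (\cite{LN, We}), $\#\{P\in\mathcal I_q\colon \deg P=n\}=q^n/n+O(q^{n/2}/n)$. Summing over $n\le N$ gives $\gamma_1 q^N/N\le \#Q_N(\mathcal I_q)\le \gamma_2 q^N/N$ for large $N$, so $\mathcal I_q$ has growth density with $\alpha=1$ and $\beta=1$. Since $\overline d_q(U|\mathcal I_q)>0$, Main Theorem yields a set $E_{\mathcal I_q,U}\subset\{x\in E\colon \{A_n(x)\}\subset\mathcal I_q\}\subset E$ with $\dim_{\rm H}E_{\mathcal I_q,U}=1/2$. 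It also gives
\[
\overline d_q\Bigl(V\cap U\Bigm|\mathcal I_q\Bigr)=\overline d_q(U|\mathcal I_q)>0,\qquad V:=\bigcup_{n}\bigcap_{x\in E_{\mathcal I_q,U}}\{A_n(x)\},
\]
together with the corresponding statement for degree sets.

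\textbf{First bullet.} Fix $x\in E_{\mathcal I_q,U}$. Every element of $V$ is a digit common to all points of $E_{\mathcal I_q,U}$ at some fixed position, so $V\subset\{A_n(x)\colon n\in\mathbb N\}$. Hence $\{A_n(x)\}\cap U\supset V\cap U$. The set $V\cap U$ is a subset of $\mathcal I_q$ with positive relative upper density, so Theorem~\ref{BL-thm} applied to it gives, for each $k$, $F,G$ with $G\neq0$ and $F+AG\in V\cap U$ for all $\deg A<k$. This configuration lies in $\{A_n(x)\}\cap U$, and $F,G$ do not depend on $x$.

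\textbf{Second bullet.} Set $W:=\bigcup_n\bigcap_{x}\{\deg A_n(x)\}$. As before, $W\cap\deg(U)\subset\{\deg A_n(x)\}\cap\deg(U)$ for every $x$. Main Theorem gives
\[
\overline d\bigl(W\cap\deg(U)\bigm|\deg(\mathcal I_q)\bigr)=\overline d(\deg(U)|\deg(\mathcal I_q)).
\]
Irreducible polynomials exist in every degree $\ge1$, so $\deg(\mathcal I_q)=\mathbb N$ and these relative densities are ordinary upper densities. The right-hand side equals $\overline d(\deg(U))>0$. Szemer\'edi's theorem \cite{S75} then gives arithmetic progressions of every length in $W\cap\deg(U)$, hence in $\{\deg A_n(x)\}\cap\deg(U)$.

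Neither step is a real obstacle once Main Theorem is available. The two points to check are the growth-density estimate for $\mathcal I_q$, which needs the error term summed over $n\le N$ to be absorbed into constants, and the equality $\deg(\mathcal I_q)=\mathbb N$. Without the latter the relative density would not reduce to $\overline d(\deg(U))$.
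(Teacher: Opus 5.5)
Your proposal is correct and follows the paper's route, which is a one-line derivation: $\mathcal I_q$ has growth density with exponent $1$, so Main Theorem applies with $S=\mathcal I_q$, and then L\^e's theorem is applied to the common-digit set $\bigcup_n\bigcap_x\{A_n(x)\}\cap U$ and Szemer\'edi's theorem to its degree set. Your explicit check that $\deg(\mathcal I_q)=\mathbb N$ is a useful addition. One harmless slip: since $\mathcal I_q$ as defined includes non-monic irreducibles, the count in degree $n$ is $(q-1)\bigl(q^n/n+O(q^{n/2}/n)\bigr)$, and the factor $q-1$ is simply absorbed into $\gamma_1,\gamma_2$.
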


\subsection{Structure of the paper}
The rest of the paper is organized as follows. 
In Section~2, we collect several preliminary results needed in the sequel. 
In Section~3, we follow the general strategy of \cite[Section 3]{NT}; however, several substantial modifications are required in the present setting. In particular, both the construction of the Cantor-like set, called the extreme seed set, and the insertion arguments must be reformulated so as to accommodate continued fraction expansions of Laurent series. 
In Section~4, we apply the construction developed in Section~3 to complete the proof of Main Theorem. Unlike the situation in \cite{NT}, our argument must simultaneously deal with two different notions of density, one on subsets of $\mathbb{F}_q^1[X]$ and the other on the corresponding degree sets in $\mathbb{N}$. For this reason, the construction has to be carried out carefully so that both densities are preserved at the same time.

\section{Preliminaries}
This section summarizes preliminary results needed for the proofs.

\subsection{Fundamental cylinders for continued fractions}\label{CF-sec}

For $n\in\mathbb N$ and
$(A_1,\ldots, A_n)\in (\mathbb F_q^1[X])^n$, we define an {\it $n$-th fundamental cylinder} by
\[I(A_1,\ldots, A_n)=\{x\in \Delta\colon A_i(x)=A_i\ \text{ for }i=1,\ldots,n\},\]
where each $A_i(x)$ is defined by \eqref{LSCF}.
%Fundamental cylinders describe the basic pieces of $\Delta$ determined by
%finite initial blocks of digits.
Let $|\cdot|$ denote the diameter of sets in $\Delta$. 
The following lemma gives the
exact diameter of a fundamental cylinder.
\begin{lem}[\cite{HH, Ni, Wu}]
\label{proper}
For every $n\in \mathbb N$ and every $(A_1,\ldots, A_n)\in (\mathbb F_q^1[X])^n$ we have $I(A_1,\ldots, A_n)$ is a disc with diameter
\[|I(A_1,\ldots, A_n)|=q^{-2\sum_{i=1}^n {\rm deg}A_i-1}.\]
\end{lem}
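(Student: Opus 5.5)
The plan is to identify the cylinder explicitly through the convergents of the continued fraction. Let $P_n/Q_n$ be the $n$th convergent of $(A_1,\dots,A_n)$, defined by $P_{-1}=1$, $P_0=0$, $Q_{-1}=0$, $Q_0=1$ and
\[P_k=A_kP_{k-1}+P_{k-2},\qquad Q_k=A_kQ_{k-1}+Q_{k-2}.\]
Since every $\deg A_i\ge 1$, the ultrametric inequality and induction give $\deg Q_k=\sum_{i=1}^k\deg A_i$. Moreover, $P_{k-1}Q_k-P_kQ_{k-1}=(-1)^k$.

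Let $x\in\Delta$ and let $t\in\Delta$ stand for $T^n(x)$. Composing the Möbius maps shows that $A_i(x)=A_i$ for $i\le n$ holds if and only if
\[x=\frac{P_n+tP_{n-1}}{Q_n+tQ_{n-1}}\]
for some $t\in\Delta$. For the converse direction, note that for any $t\in\Delta$ we have $\|1/(A_n+t)\|_q=q^{-\deg A_n}<1$ and $[A_n+t]=A_n$. Iterating this observation recovers the digits $A_n,\dots,A_1$. So $I(A_1,\dots,A_n)=\phi(\Delta)$, where $\phi(t)=(P_n+tP_{n-1})/(Q_n+tQ_{n-1})$.

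Next, compute
\[\phi(t)-\phi(s)=\frac{\pm(t-s)}{(Q_n+tQ_{n-1})(Q_n+sQ_{n-1})}.\]
Because $\deg Q_{n-1}<\deg Q_n$ and $\|t\|_q<1$, we get $\|Q_n+tQ_{n-1}\|_q=\|Q_n\|_q$. Hence
\[\|\phi(t)-\phi(s)\|_q=q^{-2\deg Q_n}\,\|t-s\|_q,\]
so $\phi$ is a similarity with ratio $q^{-2\sum\deg A_i}$. In particular $\phi$ carries the disc $\Delta=\{\|t\|_q\le q^{-1}\}$ onto the disc centered at $\phi(0)=P_n/Q_n$ of radius $q^{-2\sum\deg A_i-1}$. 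In a non-Archimedean field a closed disc of radius $q^{-m}$ contains points at exactly that distance (taking the values $|\cdot|\in q^{\mathbb Z}$), so the diameter equals the radius. Since $\Delta$ itself has diameter $q^{-1}$, this yields the formula.

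The main obstacle I expect is the exact characterization $I(A_1,\dots,A_n)=\phi(\Delta)$. This includes the boundary issue that the tails $t$ range over all of $\Delta$, including rational and zero tails, where the expansion terminates. That requires care with the convention $T(0)=0$, namely that the digits $A_i(x)$ are defined only when $T^{i-1}(x)\ne0$. I would handle this by checking that $t=0$ gives $x=P_n/Q_n$, whose expansion has exactly the digits $A_1,\dots,A_n$, so this point is included and the set is the full disc.
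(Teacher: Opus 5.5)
The paper gives no proof of this lemma; it quotes it from \cite{HH, Ni, Wu}. Those sources obtain it by essentially your route: the convergent recursion, the identity $x=(P_n+T^n(x)P_{n-1})/(Q_n+T^n(x)Q_{n-1})$, and $\|Q_n\|_q=q^{\sum\deg A_i}$. Your proposal is therefore a correct self-contained replacement for the citation, and your handling of the tail $t=0$ is fine.

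The one step that needs an extra line is the word ``onto''. That $\phi$ multiplies all distances by $q^{-2\deg Q_n}$ gives two things: the diameter formula, and the inclusion of $\phi(\Delta)$ in the closed disc $D$ of radius $q^{-2\deg Q_n-1}$ about $P_n/Q_n$. It does not give $\phi(\Delta)=D$, which is what ``is a disc'' asserts, and which the paper later uses together with Remark~\ref{remnon}.

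To close this, take $x\in D$ and put $t=(P_n-xQ_n)/(xQ_{n-1}-P_{n-1})$, so that $\phi(t)=x$. Here $\|P_n-xQ_n\|_q\le q^{-\deg Q_n-1}$. The denominator satisfies $xQ_{n-1}-P_{n-1}=Q_{n-1}(x-P_n/Q_n)+(-1)^{n+1}/Q_n$. Its first term has norm strictly below $q^{-\deg Q_n}$, so the denominator has norm exactly $q^{-\deg Q_n}$. Hence $\|t\|_q\le q^{-1}$, i.e.\ $t\in\Delta$.

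Alternatively, $t\mapsto X^{2\deg Q_n}(\phi(t)-P_n/Q_n)$ is an isometry of the compact set $\Delta$ into itself. An isometry of a compact metric space into itself is surjective, which again gives $\phi(\Delta)=D$.
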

Besides this metric description, we will also use the simple but important
topological consequences of the non-Archimedean structure. We record them for later reference.
\begin{rem}
\label{remnon}
Since $\|\cdot\|_q$ is non-Archimedean, that is,
\[\|A+B\|_q \le \max\{\|A\|_q,\|B\|_q\},\]
every disc in $\Delta$ is both open and closed (clopen). In particular, every fundamental cylinder is clopen. Moreover, any two discs in $\Delta$ that intersect are nested: if $D_1\cap D_2\neq\emptyset$, then either $D_1\subset D_2$ or $D_2\subset D_1$.
\end{rem}

 \subsection{Lower bounds of Hausdorff dimension}\label{dimension-sec}
 For $x\in \Delta$ and $r>0,$ let $B(x, r)=\{y\in \Delta\colon \|x-y\|_q<r\}.$ 
 We first recall a standard method for deriving dimension lower bounds from measures.
\begin{lem}\textnormal{(\cite[Lemma~3.2]{HH}; see also \cite{Fal97, Fal14})}
\label{mass1}
Let $\Lambda$ be a Borel subset of $\Delta$ and let $\nu$ be a Borel probability measure on $\Delta$ with $\nu(\Lambda)=1$. If there exists $\lambda>0$ such that for any $x\in \Lambda$,
\[\liminf_{r\to0}\frac{\log\nu(B(x,r))}{\log r}\geq \lambda,\] then $\dim_{\rm H}\Lambda\geq \lambda$.
\end{lem}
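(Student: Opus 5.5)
The plan is to run the standard mass distribution argument, after a uniformization step that turns the pointwise $\liminf$ hypothesis into a uniform bound on a large piece of $\Lambda$. Fix $\varepsilon\in(0,\lambda)$ and set $s=\lambda-\varepsilon$. By hypothesis, for every $x\in\Lambda$ there is $r_x>0$ such that
\[
\nu(B(x,r))\le r^{s}\qquad\text{for all } 0<r<r_x .
\]
For $k\in\mathbb N$ let $\Lambda_k=\{x\in\Lambda\colon \nu(B(x,r))\le r^{s}\ \text{for all } 0<r<1/k\}$.

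The first step is to choose $k$ with $\nu^*(\Lambda_k)>1/2$, where $\nu^*$ is the outer measure of $\nu$. The sets $\Lambda_k$ increase to $\Lambda$, and $\nu(\Lambda)=1$. To avoid measurability issues, I would use $\nu^*$ throughout. Continuity from below holds for the outer measure induced by a finite Borel measure: take Borel hulls of the $\Lambda_k$ and intersect them to keep them increasing. Alternatively, one can check that $\Lambda_k$ is Borel by restricting to rational $r$, since $r\mapsto\nu(B(x,r))$ is monotone and left-continuous for open balls.

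The second step is the covering estimate. Let $\{U_i\}$ be any countable cover of $\Lambda_k$ by sets with $0<|U_i|<\delta:=1/(2k)$, discarding those that miss $\Lambda_k$. For each $i$ pick $x_i\in U_i\cap\Lambda_k$. Then $U_i\subset B(x_i,2|U_i|)$, and $2|U_i|<1/k$. In the ultrametric setting even a radius slightly larger than $|U_i|$ would suffice, but the factor $2$ is harmless. Hence
\[
\nu^*(U_i\cap\Lambda_k)\le \nu(B(x_i,2|U_i|))\le 2^{s}|U_i|^{s}.
\]
Sets of diameter $0$ are single points. These have $\nu$-measure $0$ when they lie in $\Lambda_k$, because $\nu(\{x\})\le\nu(B(x,r))\le r^s\to0$. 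So they can be replaced by tiny balls at negligible cost, or simply ignored.

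The third step is to sum over $i$. Subadditivity gives
\[
\tfrac12<\nu^*(\Lambda_k)\le\sum_i 2^{s}|U_i|^{s},
\]
so $\mathcal H^{s}_\delta(\Lambda_k)\ge 2^{-s-1}$ for all small $\delta$. Therefore $\mathcal H^{s}(\Lambda)\ge\mathcal H^{s}(\Lambda_k)>0$, which gives $\dim_{\rm H}\Lambda\ge s=\lambda-\varepsilon$. Letting $\varepsilon\to0$ yields the claim.

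The only delicate point is the first step: passing from the pointwise radii $r_x$ to a uniform radius on a set of positive (outer) measure, together with the measurability bookkeeping there. The covering estimate itself is routine, and the non-Archimedean structure (Remark~\ref{remnon}) only makes the inclusion $U_i\subset B(x_i,2|U_i|)$ easier.
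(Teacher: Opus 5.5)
Your argument is correct. It is the standard mass distribution proof (uniformize the pointwise bound $\nu(B(x,r))\le r^{s}$ on a set $\Lambda_k$ of outer measure $>1/2$, then estimate arbitrary $\delta$-covers), which is what the paper relies on by citing \cite[Lemma~3.2]{HH} and \cite{Fal97, Fal14} rather than giving its own proof; your treatment of measurability via Borel hulls and of diameter-zero sets is sound.
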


In order to compare the dimensions of two sets through maps between them,
we recall some notions.
Let $F\subset \Delta$ be a set. We say $f\colon F\to \Delta$ is {\it H\"older continuous with exponent} $\gamma\in(0,1]$ if there exists $C>0$ such that
\[||f(x)-f(y)||_{q}\le C||x-y||_q^{\gamma}\ \text{ for all } x, y\in F.\]
The following lemma shows that \cite[Proposition~3.3]{Fal14}, originally proved in the Euclidean setting, remains valid in the setting of $\Delta$.
\begin{lem}
\label{Holder-F}
Let $F\subset \Delta$ and let $f\colon F\to \Delta$ be H\"older continuous with exponent $\gamma\in(0,1]$.
Then \[\dim_{\rm H}F\geq\gamma\cdot\dim_{\rm H}f(F).\]\end{lem}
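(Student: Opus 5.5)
The plan is to reproduce the Euclidean argument of \cite[Proposition~3.3]{Fal14} word for word, using only two ingredients: the definition of Hausdorff measure through countable covers, and the fact that $\|\cdot\|_q$ is a metric on $\Delta$. The Euclidean structure plays no role in that argument.

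Fix $s>\dim_{\rm H}F$; we may assume $\dim_{\rm H}F<\infty$, since otherwise there is nothing to prove. Then $\mathcal H^s(F)=0$. Hence for any $\varepsilon>0$ and $\delta>0$ there is a countable cover $\{U_i\}$ of $F$ with $|U_i|\le\delta$ and $\sum_i|U_i|^s<\varepsilon$. We may assume each $U_i\subset F$, replacing $U_i$ by $U_i\cap F$; this does not increase diameters.

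For $x,y\in U_i$, the Hölder condition gives
\[
\|f(x)-f(y)\|_q\le C\|x-y\|_q^{\gamma}\le C|U_i|^{\gamma},
\]
so $|f(U_i)|\le C|U_i|^\gamma$. The sets $\{f(U_i)\}$ therefore cover $f(F)$, have diameters at most $C\delta^\gamma$, and satisfy
\[
\sum_i|f(U_i)|^{s/\gamma}\le C^{s/\gamma}\sum_i|U_i|^{s}<C^{s/\gamma}\varepsilon.
\]
Since $C\delta^\gamma\to0$ as $\delta\to0$, this gives $\mathcal H^{s/\gamma}(f(F))\le C^{s/\gamma}\mathcal H^s(F)=0$. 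Consequently $\dim_{\rm H}f(F)\le s/\gamma$, that is, $\gamma\dim_{\rm H}f(F)\le s$. Letting $s\downarrow\dim_{\rm H}F$ yields $\dim_{\rm H}F\ge\gamma\dim_{\rm H}f(F)$.

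There is no real obstacle here. The only point to check is that the definitions of $\mathcal H^s$ and $\dim_{\rm H}$ on $\Delta$ are the standard metric-space ones, with arbitrary covering sets and diameter taken with respect to $\|\cdot\|_q$. If the paper instead defines them using covers by discs, one uses Remark~\ref{remnon}: every set of diameter $r$ lies in a single disc of diameter at most $r$, because any ball around one of its points of radius slightly larger than $r$ works and the diameter of a disc in $\Delta$ takes values in $q^{\mathbb Z}$. So disc-covers and arbitrary covers give the same dimension, and the argument above goes through unchanged.
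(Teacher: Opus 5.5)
Your proposal is correct and takes the same route as the paper. The paper gives no written proof; it simply asserts that the covering argument of \cite[Proposition~3.3]{Fal14} carries over to $\Delta$. Your proof is exactly that argument, written out with diameters taken with respect to $\|\cdot\|_q$, so it supplies the details the paper leaves implicit.
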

We say $f\colon F\to \Delta$ is {\it almost Lipschitz} if
for any $\gamma\in(0,1)$,
$f$ is H\"older continuous with exponent $\gamma$.
Applying Lemma~\ref{Holder-F} and letting $\gamma\to 1$, we obtain the following.
\begin{lem}\label{Holder}Let $F\subset \Delta$ and let $f\colon F\to \Delta$ be almost Lipschitz. Then \[\dim_{\rm H} F\geq\dim_{\rm H}f(F).\]
\end{lem}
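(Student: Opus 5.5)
The statement to prove is Lemma~\ref{Holder}: if $f\colon F\to\Delta$ is almost Lipschitz, then $\dim_{\rm H}F\ge \dim_{\rm H}f(F)$. The plan is a direct reduction to Lemma~\ref{Holder-F} followed by a limit in the exponent.

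First fix $\gamma\in(0,1)$. By the definition of almost Lipschitz, $f$ is H\"older continuous with exponent $\gamma$ on $F$: there is a constant $C_\gamma>0$ with $\|f(x)-f(y)\|_q\le C_\gamma\|x-y\|_q^{\gamma}$ for all $x,y\in F$. The constant may depend on $\gamma$, but Lemma~\ref{Holder-F} places no restriction on its size. Applying Lemma~\ref{Holder-F} therefore gives
\[
\dim_{\rm H}F\ \ge\ \gamma\cdot \dim_{\rm H} f(F).
\]

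Next, the left-hand side does not depend on $\gamma$, so we may let $\gamma\to1^-$ along any sequence in $(0,1)$. Since $f(F)\subset\Delta$, we have $\dim_{\rm H}f(F)\le \dim_{\rm H}\Delta<\infty$, so the right-hand side tends to $\dim_{\rm H}f(F)$. If one prefers to avoid this finiteness remark, the case $\dim_{\rm H}f(F)=\infty$ is trivial anyway, since then $\gamma\cdot\infty=\infty$ for every $\gamma>0$ and already forces $\dim_{\rm H}F=\infty$. Taking the limit yields $\dim_{\rm H}F\ge \dim_{\rm H}f(F)$.

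There is essentially no obstacle here. All the substance lies in Lemma~\ref{Holder-F}, which we are allowed to assume. The only point needing care is that the H\"older constant depends on $\gamma$, and this is harmless because Lemma~\ref{Holder-F} holds for any constant $C$.
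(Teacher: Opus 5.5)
Your proposal is correct and is exactly the paper's argument: apply Lemma~\ref{Holder-F} for each $\gamma\in(0,1)$, with a H\"older constant that may depend on $\gamma$, and let $\gamma\to1$. Your remark that $\dim_{\rm H} f(F)$ is finite (or that the infinite case is trivial) is a detail the paper leaves implicit.
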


\subsection{Convergence exponent}\label{conv-sec}
Since we study continued fractions whose digits are restricted to a prescribed subset of $\mathbb F_q^1[X]$, we need a dimension result for such restricted digit sets. For this purpose, we recall the notion of convergence exponent.
Let $S\subset \mathbb F_q^1[X].$
 Define
\[\tau(S)=\inf\left\{s\geq0\colon\sum_{ A\in S}(q^{-2{\rm deg}A})^s<\infty\right\}.\]
We call $\tau(S)$ a {\it convergence exponent} of $S$. The following theorem treats the Hausdorff dimension of the set of points
whose digits all lie in $S$ and whose degrees tend to infinity.

 \begin{lem}[{\cite[Theorem~2.5]{HW}}]\label{HirstLau} For any infinite subset $S$ of $\mathbb F_q^1[X]$, we have \[\dim_{\rm H}\{x\in \Delta \setminus \mathbb F_q(X)\colon \{A_n(x)\colon n\in\mathbb N\}\subset S\text{ and }{\rm deg}A_n(x)\to\infty\text{ as }n\to\infty\}=\tau(S).\]\end{lem}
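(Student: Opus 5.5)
We prove the two inequalities separately. The upper bound comes from a direct covering by fundamental cylinders. The lower bound comes from a Cantor-type subset carrying a suitable measure, to which Lemma~\ref{mass1} applies. Write $F_S$ for the set on the left-hand side.

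\emph{Upper bound.} Fix $s>\tau(S)$ and $\varepsilon\in(0,1)$. Since $\sum_{A\in S}q^{-2s\deg A}<\infty$, there is $M$ with
\[
\sum_{A\in S,\ \deg A\ge M}q^{-2s\deg A}<\varepsilon .
\]
Every $x\in F_S$ has $\deg A_n(x)\ge M$ for all $n>N$, for some $N$. Hence $F_S$ is contained in the countable union, over $N$ and over prefixes $(B_1,\dots,B_N)\in S^N$, of the sets
\[
F(B_1,\dots,B_N)=\{x\in I(B_1,\dots,B_N)\colon A_{N+j}(x)\in S,\ \deg A_{N+j}(x)\ge M\ \text{for all } j\ge1\}.
\]
For each $n$, the set $F(B_1,\dots,B_N)$ is covered by the cylinders $I(B_1,\dots,B_N,C_1,\dots,C_n)$ with $C_i\in S$ and $\deg C_i\ge M$. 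By Lemma~\ref{proper},
\[
\sum|I(B_1,\dots,B_N,C_1,\dots,C_n)|^s\le q^{-2s\sum_i\deg B_i}\Bigl(\sum_{\deg C\ge M,\,C\in S}q^{-2s\deg C}\Bigr)^{n}\le \varepsilon^n .
\]
The diameters of these covers tend to $0$ as $n\to\infty$, so $\mathcal H^s(F(B_1,\dots,B_N))=0$. By countable stability, $\dim_{\rm H}F_S\le s$, and letting $s\downarrow\tau(S)$ gives $\dim_{\rm H}F_S\le\tau(S)$.

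\emph{Lower bound.} Fix $s<\tau(S)$; we may assume $\tau(S)>0$, since otherwise there is nothing to prove. Then $\sum_{A\in S}q^{-2s\deg A}=\infty$. Removing finitely many degrees does not change this, since only finitely many polynomials have degree below any given bound. So for each $k$ we can choose a finite set $S_k\subset S$ with
\[
\min_{A\in S_k}\deg A\ge k \quad\text{and}\quad Z_k:=\sum_{A\in S_k}q^{-2s\deg A}\ge1 .
\]
Let $D_k=\max_{A\in S_k}\deg A$. We choose stage lengths $m_k$ growing so fast that $D_{k+1}\le \delta_k\sum_{j\le k}m_j$ with $\delta_k\to0$. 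Let $\Lambda$ be the set of $x$ whose digits in stage $k$ (positions $m_1+\dots+m_{k-1}<n\le m_1+\dots+m_k$) all lie in $S_k$. Then $\Lambda\subset F_S$, because $\deg A_n\ge k$ in stage $k$. Moreover $\Lambda$ is compact, since it is a nested intersection of finite unions of clopen cylinders (Remark~\ref{remnon}).

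Define $\nu$ on $\Lambda$ by giving each cylinder the product weight
\[
\nu(I(A_1,\dots,A_n))=\prod_{i\le n}\frac{q^{-2s\deg A_i}}{Z_{k(i)}}\le\prod_{i\le n}q^{-2s\deg A_i}=q^{s}\,|I(A_1,\dots,A_n)|^{s}.
\]
Here $k(i)$ is the stage containing position $i$, and the inequality uses $Z_k\ge1$. Consistency of these weights is immediate from the definition of $Z_k$.

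Now let $x\in\Lambda$ and let $r$ be small. Choose $n$ with $|I_{n+1}(x)|\le r<|I_n(x)|$, where $I_n(x)$ denotes the $n$-th cylinder containing $x$. By the nested-disc property (Remark~\ref{remnon}), $B(x,r)\subset I_n(x)$. Therefore
\[
\nu(B(x,r))\le q^{s}|I_n(x)|^{s}\quad\text{and}\quad r\ge|I_{n+1}(x)|=|I_n(x)|\,q^{-2\deg A_{n+1}(x)} .
\]

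\emph{Main obstacle.} The key step is controlling the gap between consecutive cylinder sizes. By the choice of $m_k$, we have $\deg A_{n+1}(x)\le D_{k+1}=o\bigl(\sum_{i\le n}\deg A_i(x)\bigr)$, because each digit has degree at least $1$. Hence $\log r\ge(1+o(1))\log|I_n(x)|$. It follows that
\[
\liminf_{r\to0}\frac{\log\nu(B(x,r))}{\log r}\ge s ,
\]
and Lemma~\ref{mass1} gives $\dim_{\rm H}F_S\ge\dim_{\rm H}\Lambda\ge s$. Letting $s\uparrow\tau(S)$ completes the proof.
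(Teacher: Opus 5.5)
Your proof is correct. The paper gives no proof of this statement: it quotes it from \cite[Theorem~2.5]{HW}. Only the inequality $\le\tau(S)$ is used there, through Proposition~\ref{dim\_Hirst}; the lower bound $1/(2\alpha)$ in the Main Theorem comes independently from the seed set. Your covering argument is a self-contained proof of exactly the direction the paper relies on.

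Your lower bound uses the same mechanism the paper uses for the seed set in Lemma~\ref{gibbsnew} and the proof of Proposition~\ref{seed-Prop}: finitely many admissible digits at each position, a product measure, and control of $\log|I_{n+1}(x)|/\log|I_n(x)|\to1$. There are two differences, and they buy different things.
\begin{itemize}
\item You weight digits by $q^{-2s\deg A}/Z_k$ instead of uniformly. This lets your argument work for an arbitrary infinite $S$ without any growth-density hypothesis, whereas the paper's uniform measure needs the cardinality lower bound of Lemma~\ref{Cardi}.
\item You control the gap between consecutive cylinder sizes by long stages $m_k$, while the paper uses the degree windows $(2n)^t\le\deg A_n<(2n+1)^t$. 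The paper's windows force digits at different positions to have different degrees, so its Cantor set lies in $E$ and supports the insertion step. Your $\Lambda$ necessarily repeats digits within a stage, which is harmless for this lemma but would not serve the paper's purpose.
\end{itemize}

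There is one small indexing slip in your final step. The digit at position $n+1$ is controlled by the $D$ of the stage containing $n+1$, not the stage containing $n$. Suppose $n$ lies in stage $k$. Either $n+1$ is still in stage $k$, and then $\deg A_{n+1}(x)\le D_k\le\delta_{k-1}\sum_{j\le k-1}m_j\le\delta_{k-1}n$. Or $n=\sum_{j\le k}m_j$, and then $\deg A_{n+1}(x)\le D_{k+1}\le\delta_k n$. In both cases $\deg A_{n+1}(x)=o(n)=o\bigl(\sum_{i\le n}\deg A_i(x)\bigr)$, so your conclusion stands.
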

To estimate $\tau(S)$, it is convenient to express it in terms of the number of elements of $S$ at each degree level.
\begin{lem}[{\cite[Lemma~3.6]{HW}}]\label{expconv}
Let $S$ be an infinite subset of $\mathbb F_q^1[X]$. For any $n\ge 1,$ let $D_n={\rm card}\{A\in S\colon {\rm deg}A=n\}.$ Then \[\tau(S)=\limsup_{n\to \infty}\frac{\log D_n}{2n\log q}.\]
\end{lem}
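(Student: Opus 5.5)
Write $t:=\limsup_{n\to\infty}\frac{\log D_n}{2n\log q}$. The plan is to regroup the series defining $\tau(S)$ by degree and then apply the root test. Since every $A\in S$ has $\deg A\ge 1$ and there are exactly $D_n$ elements of $S$ of degree $n$, for every $s\ge 0$ we have the identity of series of nonnegative terms
\[
\sum_{A\in S}(q^{-2\deg A})^s=\sum_{n\ge 1}D_n\,q^{-2sn}.
\]
Regrouping is legitimate because all terms are nonnegative. So the question becomes one about the convergence of a power series in $z=q^{-2s}$ with coefficients $D_n$, and the claim is a Cauchy--Hadamard-type statement.

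First I would check that $t$ is a well-defined number in $[0,1/2]$. There are at most $(q-1)q^{n}$ polynomials of degree $n$, so $D_n\le q^{n+1}$, which gives $t\le 1/2$. Since $S$ is infinite, $D_n\ge 1$ for infinitely many $n$, so $t\ge 0$. Terms with $D_n=0$ are read as $\log 0=-\infty$ and do not affect the $\limsup$.

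Next, convergence for $s>t$. Pick $\varepsilon>0$ with $t+2\varepsilon<s$. For all large $n$ we have $D_n\le q^{2n(t+\varepsilon)}$, so the $n$-th term is at most $q^{-2n(s-t-\varepsilon)}\le q^{-2n\varepsilon}$. The series is therefore dominated by a convergent geometric series, and $\tau(S)\le s$. Letting $s\downarrow t$ gives $\tau(S)\le t$.

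Then divergence for $0\le s<t$ (this step is only needed when $t>0$). Pick $\varepsilon>0$ with $s+\varepsilon<t$. For infinitely many $n$ we have $D_n\ge q^{2n(s+\varepsilon)}$, so the corresponding terms satisfy $D_nq^{-2sn}\ge q^{2n\varepsilon}\ge 1$. The terms do not tend to $0$, so the series diverges. Hence no $s<t$ lies in the set over which the infimum is taken, and $\tau(S)\ge t$.

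Combining the two bounds gives $\tau(S)=t$. The only point needing care is bookkeeping rather than a real obstacle: the endpoint $s=t$ is irrelevant to the infimum, and the cases $D_n=0$ and $t=0$ are covered by the lower bound $\tau(S)\ge 0$, which holds by definition.
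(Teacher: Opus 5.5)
Your proof is correct. The paper gives no proof of this lemma; it imports it from \cite[Lemma~3.6]{HW}, so there is no in-paper argument to compare against.

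Your route is the standard one. You regroup the series by degree,
$\sum_{A\in S}q^{-2s\deg A}=\sum_{n\ge 1}D_n q^{-2sn}$, which is legitimate since all terms are nonnegative. You then apply the root test, which amounts to the Cauchy--Hadamard formula for the radius of convergence of $\sum_n D_n z^n$ evaluated at $z=q^{-2s}$.

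The edge cases are handled soundly:
\begin{itemize}
\item $t\le 1/2$ follows from $D_n\le (q-1)q^n$.
\item $t\ge 0$ holds because $S$ is infinite while each degree class is finite.
\item Indices with $D_n=0$ do not affect either step.
\end{itemize}

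Only your convergence direction, $\tau(S)\le t$, is actually used later in the paper, in Lemma~\ref{disc2}. There the growth-density bound on $D_n$ is fed into this inequality to get $\tau(S)\le 1/(2\alpha)$.
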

We next show that if $S$ has growth density, then its convergence exponent is bounded above by the corresponding critical value.

\begin{lem}\label{disc2}If $S\subset\mathbb F_q^1[X]$ has growth density with exponent $\alpha\geq1$, then we have
\[\tau(S)\le \frac{1}{2\alpha}.\]
\end{lem}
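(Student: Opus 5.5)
The plan is to use Lemma~\ref{expconv}, which expresses $\tau(S)$ as $\limsup_{n\to\infty}\frac{\log D_n}{2n\log q}$ with $D_n=\#\{A\in S\colon \deg A=n\}$. Everything then reduces to an upper bound on $D_n$, and for that the upper half of the growth density hypothesis is enough. The point is that every polynomial of degree exactly $n$ has degree at most $n$, so
\[
D_n\le \#Q_n(S)\le \gamma_2\frac{q^{n/\alpha}}{n^{\beta}}
\]
for all sufficiently large $n$.

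Next, take logarithms and divide by $2n\log q$:
\[
\frac{\log D_n}{2n\log q}\le \frac{\log\gamma_2}{2n\log q}+\frac{1}{2\alpha}-\frac{\beta\log n}{2n\log q}.
\]
Since $\beta\ge 0$, the last term is nonpositive and can simply be dropped. The first term tends to $0$ as $n\to\infty$. Taking $\limsup$ therefore gives $\tau(S)\le 1/(2\alpha)$.

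There is one minor technicality. Some $D_n$ may equal $0$, and then $\log D_n=-\infty$. Such terms only lower the $\limsup$, so they cause no harm, and I would read Lemma~\ref{expconv} with this convention in mind.

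I do not expect a real obstacle. The only care needed is to use the upper bound on the cumulative count $\#Q_N(S)$, rather than any lower bound, and to note that $\beta\ge0$ lets us discard the polynomial factor.
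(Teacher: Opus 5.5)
Your proposal is correct and matches the paper's proof essentially step for step: bound $D_n\le \#Q_n(S)\le\gamma_2 q^{n/\alpha}/n^{\beta}$, take logarithms, and pass to the $\limsup$ via Lemma~\ref{expconv}. Your remark about indices with $D_n=0$ is extra care that the paper leaves implicit and does not affect the argument.
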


\begin{proof}

Since $D_n \le \#Q_n(S)$ for any $n$ and $S$ has growth density with exponent $\alpha\geq1$, there exist $\gamma_2$ and $\beta\ge 0$ such that for all sufficiently large $n,$ 
\[D_n \le \gamma_2 \frac{q^{n/\alpha}}{n^\beta}.\]
Hence, for all sufficiently large $n,$ 
\[\frac{\log D_n}{2n \log q}\le 
\frac{1}{2\alpha}
+ \frac{\log \gamma_2 - \beta \log n}{2n \log q}.\]
Letting $n \to \infty$, by Lemma~\ref{expconv} we obtain $\tau(S)\le 1/(2\alpha).$

\end{proof}
Combining Lemmas \ref{HirstLau} and \ref{disc2} we obtain the following.
\begin{pro}
\label{dim_Hirst}
If $S\subset\mathbb F_q^1[X]$ has growth density with exponent $\alpha\geq1$, then we have
\[\dim_{\rm H}\{x\in \Delta\setminus \mathbb F_q(X)\colon \{A_n(x)\colon n\in\mathbb N\}\subset S\text{ and }{\rm deg}A_n(x)\to\infty\text{ as }n\to\infty\}\leq \frac{1}{2\alpha}.\]
\end{pro}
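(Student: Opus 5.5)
This is immediate from Lemma~\ref{HirstLau} and Lemma~\ref{disc2}, so the plan is simply to chain them. Let
\[
\mathcal{E}_S:=\{x\in \Delta\setminus \mathbb F_q(X)\colon \{A_n(x)\colon n\in\mathbb N\}\subset S\text{ and }\deg A_n(x)\to\infty\text{ as } n\to\infty\}.
\]

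First we must check that Lemma~\ref{HirstLau} applies to $S$. It requires $S$ to be an infinite subset of $\mathbb F_q^1[X]$. This follows from the growth density assumption: since $\#Q_N(S)\ge \gamma_1 q^{N/\alpha}/N^\beta\to\infty$ as $N\to\infty$, the set $S$ cannot be finite.

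Next, Lemma~\ref{HirstLau} gives $\dim_{\rm H}\mathcal{E}_S=\tau(S)$, and Lemma~\ref{disc2} gives $\tau(S)\le 1/(2\alpha)$. Together these yield $\dim_{\rm H}\mathcal{E}_S\le 1/(2\alpha)$, which is the claimed bound.

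There is no real obstacle. The only point needing care is the infinitude of $S$, which is what makes Lemma~\ref{HirstLau} applicable; the growth density lower bound settles it.
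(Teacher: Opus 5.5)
Your proposal is correct and matches the paper, which obtains the bound by combining Lemma~\ref{HirstLau} with Lemma~\ref{disc2}. Your explicit check that the growth density lower bound forces $S$ to be infinite, so that Lemma~\ref{HirstLau} applies, fills in a hypothesis the paper leaves implicit.
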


\section{Construction of extreme seed sets and insertion arguments}
This section is devoted to the main construction. 
We begin by constructing an {\it extreme seed set}, a Cantor-like set contained in $E$ whose digit sets are contained in a suitably chosen subset of the ambient set. 
We then establish an insertion procedure that allows us to modify this seed set in a controlled way while keeping the Hausdorff dimension unchanged.

\subsection{Extreme seed sets and dimension estimate}\label{C-const}
Let $S$ be an infinite subset of $\mathbb F_q^1[X]$.
For an infinite subset $K$ of $S$ and a natural number $t\ge 3$, define
\begin{equation}\label{seed-def}R_{t}(K)=\{x\in E\colon A_n(x)\in K \ \text{and}\ (2n)^t\le {\rm deg}A_n(x) < (2n+1)^t
\ \text{ for every}\ n \ge 1\}.\end{equation}
Non-empty sets of this form are called {\it seed sets}. %Seed sets are Moran fractals. 
 We say the seed set $R_{t}(K)$ is an {\it extreme seed set associated with $S$} if the following conditions hold:
 \begin{itemize}
 \item[(A1)] $S$ has growth density with exponent $\alpha\geq1$;
 \item[(A2)]
 $\overline{d}_q(K|S)=0;$ 
 \item[(A3)] $\dim_{\rm H}R_{t}(K)=1/(2\alpha)$.
 \end{itemize}
The aim of this section is to prove the following.

\begin{pro}
\label{seed-Prop}
 If $S\subset \mathbb F_q^1[X]$ has growth density with exponent $\alpha\geq1$, then there exist $S_{\ast}\subset S$ and a natural number $t\ge 3$ such that 
$R_{t}(S_{\ast})$ is an extreme seed set associated with $S$.
\end{pro}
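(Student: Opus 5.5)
We construct $S_\ast$ explicitly by choosing, at each level $n$, many digits of degree in $[(2n)^t,(2n+1)^t)$, and verify (A1)--(A3). Condition (A1) is the hypothesis.

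\textbf{Choice of $S_\ast$ and (A2).} Write $N_n=(2n)^t$ and $M_n=(2n+1)^t$. Since $\#Q_N(S)\asymp q^{N/\alpha}N^{-\beta}$, there is a degree $d_n\in[N_n,M_n)$ with
\[
D_{d_n}\ge \#Q_{M_n-1}(S)-\#Q_{M_n-2}(S).
\]
Alternatively, take the top block of degrees: for $L$ large, $\#\{A\in S\colon M_n-L\le\deg A<M_n\}\ge c\,q^{M_n/\alpha}M_n^{-\beta}$, because $\#Q_{M_n-1}(S)-\#Q_{M_n-L-1}(S)$ is comparable to $q^{M_n/\alpha}M_n^{-\beta}$ once $q^{-L/\alpha}\gamma_2<\gamma_1$. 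From this set we choose a subset $K_n$ with
\[
\#K_n\approx q^{M_n/\alpha}M_n^{-\beta}\big/ n
\]
(any thinning factor tending to infinity but subexponential works), and put $S_\ast=\bigcup_n K_n$. Then for $N$ in any range, $\#Q_N(S_\ast)\le \sum_{m\le n}\#K_m$. The last term is $o(\#Q_N(S))$ when $N\ge M_n-L$; the previous terms are tiny because $q^{M_{n-1}/\alpha}\ll q^{N/\alpha}$ with a gap that grows polynomially in the exponent. For $N$ below $M_n-L$ only $K_1,\dots,K_{n-1}$ contribute. This gives $\overline d_q(S_\ast|S)=0$.

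\textbf{Nonemptiness and the upper bound in (A3).} Since $\#K_n\to\infty$ and the degree windows are disjoint, digits at different levels are automatically distinct, so every sequence with $A_n\in K_n$ yields a point of $E$, and $R_t(S_\ast)$ is a nonempty Cantor set. It lies in the set of Proposition~\ref{dim_Hirst}, since $\deg A_n\to\infty$ and the digits lie in $S$, so $\dim_{\rm H}R_t(S_\ast)\le 1/(2\alpha)$.

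\textbf{Lower bound via Lemma~\ref{mass1}.} This is the main step. Put the uniform product measure $\nu$ on the cylinders: each $n$-th cylinder $I(A_1,\dots,A_n)$ with $A_i\in K_i$ gets mass $\prod_{i\le n}(\#K_i)^{-1}$. By Lemma~\ref{proper} its diameter is $q^{-2\sum_{i\le n}\deg A_i-1}\ge q^{-2\sum_{i\le n}M_i-1}$, and its mass is roughly
\[
q^{-\sum_{i\le n}M_i/\alpha}\cdot \prod_{i\le n} \bigl(i\,M_i^{\beta}\bigr).
\]
Because $t\ge3$, we have $\sum_{i\le n}M_i\asymp n^{t+1}$ while the logarithm of the polynomial factors is $O(n\log n)$, so the ratio of log-mass to log-diameter tends to $1/(2\alpha)$.

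For general $r$ the ball $B(x,r)$ is a disc. By Remark~\ref{remnon} it is nested with the cylinders, so for $|I_{n+1}|\le r<|I_n|$ the ball lies in $I_n(x)$ and meets a number of $(n+1)$-cylinders. The danger is that $|I_n|/|I_{n+1}|=q^{2\deg A_{n+1}}$ is huge, about $q^{2M_{n+1}}$, which is dominant relative to the prior sum since $M_{n+1}$ is not $o(\sum_{i\le n}M_i)$. I would handle this by counting: the disc of radius $r$ inside $I_n$ contains at most
\[
\max\bigl\{1,\ \#K_{n+1}\cdot r/|I_n| \text{ up to constants}\bigr\}
\]
sub-cylinders. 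This follows because the $(n+1)$-cylinders with $\deg A_{n+1}=d$ are equal-size discs of size $|I_n|q^{-2d}$, arranged like $1/A_{n+1}$ in a ball of radius $|I_n|q^{-d}$. This yields
\[
\nu(B(x,r))\lesssim \nu(I_n)\min\bigl\{1,\ (r/|I_n|)^{1/(2\alpha)}\cdot\text{(subexponential)}\bigr\},
\]
using $\#K_{n+1}\le q^{M_{n+1}/\alpha}$ and interpolation between the two regimes, the min being governed by the exponent $1/(2\alpha)\le 1/2$. Making this geometric counting precise, including that the chosen degrees lie in a bounded band of width $L$ so that sizes are uniform up to $q^{O(L)}$, is the main obstacle. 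Once it is in place, Lemma~\ref{mass1} gives $\dim_{\rm H}R_t(S_\ast)\ge 1/(2\alpha)$, establishing (A3) and hence Proposition~\ref{seed-Prop}.
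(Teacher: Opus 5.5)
Your choice of $S_\ast$ is a legitimate, more hands-on alternative to the paper's Lemma~\ref{find-C} plus Lemma~\ref{Cardi}: in each window you take a $1/n$-thinned piece of the top band of degrees $[M_n-L,M_n)$, with $M_n=(2n+1)^t$. Your checks of (A2), of the upper bound via Proposition~\ref{dim_Hirst}, and of $\log\nu(I_n)/\log|I_n|\to 1/(2\alpha)$ on cylinders are sound. One proviso: take $t$ large enough that every $K_n$ is nonempty, including small $n$, since growth density only controls $\#Q_N(S)$ for large $N$ and $R_t(S_\ast)$ needs a digit in every window.

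The genuine gap is the passage from cylinders to balls. You flag it as the main obstacle and leave it unfinished, and the obstacle itself comes from a miscalculation. You claim $M_{n+1}$ is not $o\bigl(\sum_{i\le n}M_i\bigr)$. In fact $M_{n+1}=(2n+3)^t=O(n^t)$, while $\sum_{i\le n}(2i)^t\ge c\,n^{t+1}$. Every point of $R_t(S_\ast)$ has $(2i)^t\le\deg A_i(x)<(2i+1)^t$, so Lemma~\ref{proper} gives $\log|I_{n+1}(x)|/\log|I_n(x)|=1+O(1/n)$. So, as in the paper, choose $n$ with $|I_{n+1}(x)|\le r<|I_n(x)|$. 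Remark~\ref{remnon} gives $B(x,r)\subset I_n(x)$, whence
\[
\frac{\log\nu(B(x,r))}{\log r}\ \ge\ \frac{\log\nu(I_n(x))}{\log|I_n(x)|}\cdot\frac{\log|I_n(x)|}{\log|I_{n+1}(x)|}.
\]
The liminf is therefore at least $1/(2\alpha)$, with no geometric counting at all.

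Moreover, the counting bound you propose in its place is false, so your route could not be completed as written. Transport $I_n(x)$ to $\Delta$ by $z=T^n y$, which is an exact similarity in this non-Archimedean setting. On $I(A_{n+1})$ we have $\|z\|_q=q^{-\deg A_{n+1}}$. Hence every chosen $(n+1)$-cylinder, and $T^n x$ itself, lies in the disc $\{\|z\|_q\le q^{-(M_{n+1}-L)}\}$.

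Consequently, for all $r$ between roughly $|I_n(x)|\,q^{-(M_{n+1}-L)}$ and $|I_n(x)|$, the disc $B(x,r)$ contains all of them, and $\nu(B(x,r))=\nu(I_n(x))$. Near the lower end of that range, your inequality would force $\nu(B(x,r))\lesssim\nu(I_n(x))\,q^{-(M_{n+1}-L)/(2\alpha)}$ up to a subexponential factor, and no such factor absorbs this. At these scales the local ratio is not controlled by counting sub-cylinders. It is harmless only because $\log r$ stays within a factor $1+O(1/n)$ of $\log|I_n(x)|$ there, which is exactly the fact you overlooked.
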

To prove Proposition~\ref{seed-Prop}, we first introduce an auxiliary function that will be
used to quantify the sparsity of the subset selected from $S$. Define a function $\mu\colon [1,\infty)\to\mathbb N$ by \begin{equation}\label{mu-def}\mu(x)=k\ \text{ for } x\in[k!,(k+1)!),\ k\in\mathbb N.\end{equation} The following lemma controls its growth on exponential scales.

\begin{lem}[{\cite[Lemma~3.3]{NT}}]\label{simple}For any $b>e$ there exists $\sigma>1$ such that \[\mu(b^n)\leq (n\log b)^{\sigma}\ \text{ for every }n\in\mathbb N.\]\end{lem}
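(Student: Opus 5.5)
The statement to prove is Lemma~\ref{simple}: for every $b>e$ there is $\sigma>1$ with $\mu(b^n)\le (n\log b)^{\sigma}$ for all $n\in\mathbb N$. The plan is to turn the definition of $\mu$ into an upper bound on $k=\mu(x)$ in terms of $\log x$. The only input needed is the elementary fact that $k!$ grows faster than any exponential, so $k$ is at most of order $\log x/\log\log x$. That is much smaller than any power $(\log x)^{\sigma}$ with $\sigma>1$.

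\emph{Step 1.} Fix $n\in\mathbb N$ and put $x=b^n\ge b>e$ and $k=\mu(x)$, so that $k!\le x<(k+1)!$. From $k!\le x$ we get $\log k!\le n\log b$. Since $k!\ge (k/e)^k$, this gives
\[
k(\log k-1)\le n\log b .
\]

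\emph{Step 2.} Set $L=n\log b$. Note that $L\ge\log b>1$ for every $n\ge1$, because $b>e$; this uniform lower bound is exactly where the hypothesis $b>e$ is used. We need $k\le L^{\sigma}$ for a single $\sigma$ that works for all $n$. I would split into two cases.

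If $k\le e^2$, then $k\le 7$. Choose $\sigma$ large enough that $(\log b)^{\sigma}\ge 7$, which is possible since $\log b>1$. Then $L^{\sigma}\ge(\log b)^{\sigma}\ge 7\ge k$.

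If $k>e^2$, then $\log k-1\ge \tfrac12\log k\ge 1$, so Step~1 gives $k\le k(\log k-1)\le L$. Hence $k\le L\le L^{\sigma}$ for any $\sigma\ge1$, again because $L>1$.

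\emph{Step 3.} Taking
\[
\sigma=\max\Bigl\{2,\ \frac{\log 7}{\log\log b}\Bigr\}
\]
covers both cases, and $\sigma>1$ as required. This $\sigma$ depends only on $b$.

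The only real care point is ensuring uniformity for small $n$. That is handled by the lower bound $L\ge\log b>1$, which makes every power $L^{\sigma}$ at least $1$ and increasing in $\sigma$. Beyond that, the argument is routine.
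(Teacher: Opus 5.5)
Your proof is correct. The paper gives no proof of this lemma; it is quoted from \cite[Lemma~3.3]{NT}. So there is no in-paper argument to compare against, and your write-up works as a self-contained proof.

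The individual steps check out:
\begin{itemize}
\item $k!\ge (k/e)^k$ holds for all $k\ge 1$, since $e^k\ge k^k/k!$.
\item The case $k>e^2$ forces $k\ge 8$, hence $\log k-1>1$ and so $k<k(\log k-1)\le n\log b$.
\item The choice $\sigma=\max\{2,\log 7/\log\log b\}$ gives $(\log b)^\sigma\ge 7$ precisely because $\log b>1$.
\end{itemize}
You correctly identified $b>e$ as what makes $n\log b>1$ for every $n$.

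One thing is worth recording. Step 1 and the case $k>e^2$ never use that $x$ is a power of $b$. Your argument therefore shows $\mu(x)\le\max\{7,\log x\}$ for every $x\ge 1$. The exponent $\sigma$ and the hypothesis $b>e$ serve only to absorb the bounded values of $\mu$ when $n\log b$ is small.

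There, the dependence of $\sigma$ on $b$ cannot be avoided. For $b\in(e,6)$ and $n=1$ we have $\mu(b)=2$. This forces $(\log b)^{\sigma}\ge 2$, so $\sigma\to\infty$ as $b\downarrow e$.

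The base-free bound is also a little more convenient for the application in Lemma~\ref{Cardi}. There the relevant base is $q$, which may equal $2<e$. With your bound, and using only that $\mu$ is nondecreasing, one gets directly $\mu(\#Q_{(2n+1)^t-1}(S))\le\max\{7,(2n+1)^t\log q\}$.
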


We now use the function $\mu$ to extract a subset $S_{\ast}\subset S$ which is sparse
enough to have relative density zero in $S$, but still sufficiently large for the
subsequent seed set construction.

\begin{lem}\label{find-C}Let
$S$ be an infinite subset of $\mathbb F_q^1[X]$ and enumerate $S$ without repetition as $S=\{A_n\in\mathbb F_q^1[X]\colon n\in\mathbb N\}$,  ${\rm deg}A_1\leq {\rm deg}A_2\leq\cdots.$
There exists a subset $S_*$ of $S$ such that 
  for all sufficiently large integers $T\geq1$,  \begin{equation}\label{seed-eq3}\frac{1}{2\mu(T)}\leq \frac{\#\{1\leq n\leq T\colon A_n\in S_* \}}{T }\leq \frac{3}{\mu(T)},\end{equation} 
  and \[\overline{d}_q(S_*|S)=0.\]
\end{lem}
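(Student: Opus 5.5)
Write $c(T)=\#\{1\le n\le T\colon A_n\in S_*\}$. The plan is to choose $S_*$ by a greedy index rule along the given enumeration, so that the counting bound \eqref{seed-eq3} holds in index space. We then translate it into the density statement $\overline{d}_q(S_*|S)=0$. The translation works because $Q_N(S)=\{A_1,\dots,A_{T_N}\}$ with $T_N=\#Q_N(S)$, since the enumeration is nondecreasing in degree.

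\textbf{Construction.} Define $S_*$ inductively: put $A_n\in S_*$ if and only if $c(n-1)<n/\mu(n)$. This gives $c(T)\le T/\mu(T)+1$ for all $T$, since $c$ only increases when it lies strictly below the threshold. Because $\mu(T)\le T$ for $T\ge 1$, we get $c(T)\le 2T/\mu(T)\le 3T/\mu(T)$, which is the upper bound.

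\textbf{Lower bound.} Suppose $c(T)<T/(2\mu(T))$. Let $n_0\le T$ be the last index with $A_{n_0}\in S_*$. Then $c(T)=c(n_0)\ge n_0/\mu(n_0)$, because $c(n_0-1)\ge n_0/\mu(n_0)-1$ up to the $+1$ slack. On the other hand, for every $n\in(n_0,T]$ we have $c(n-1)\ge n/\mu(n)$, and in particular $c(T)\ge T/\mu(T)$. This contradicts the assumption once $T$ is large, since it is only a discrete rounding issue. We must also ensure $S_*$ is infinite. This holds because $n/\mu(n)\to\infty$, which forces $c$ to increase infinitely often.

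\textbf{Density zero.} Take $T=T_N\to\infty$. Then
\[
\frac{\#Q_N(S_*)}{\#Q_N(S)}=\frac{c(T_N)}{T_N}\le \frac{3}{\mu(T_N)}\to 0,
\]
since $\mu(T)\to\infty$. Hence $\overline{d}_q(S_*|S)=0$.

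The main obstacle is bookkeeping: the threshold $n/\mu(n)$ is not monotone, because it drops at the factorial jumps of $\mu$. The lower-bound argument must therefore use the threshold at $T$ itself, not at $n_0$. This is what the argument above does: the failure to add at index $T$ (or having just added) gives $c(T)\ge T/\mu(T)-1$, which is at least $T/(2\mu(T))$ for large $T$.
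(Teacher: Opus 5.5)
\textbf{There is a genuine gap, though it is fixable.} Your overall strategy is viable, and your final step, passing from the index count to $\overline{d}_q(S_*|S)=0$ via $Q_N(S)=\{A_1,\dots,A_{T_N}\}$, is exactly the paper's. The paper does not use a greedy rule. It takes the explicit index set $P=\bigcup_{k\ge 2}\{k!+ik\colon 0\le i<k!\}$, i.e.\ every $k$-th index in the block $[k!,(k+1)!)$, so both bounds become a direct block-by-block count.

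\textbf{The upper bound rests on a false invariant.} The claim $c(T)\le T/\mu(T)+1$ for all $T$ does not hold. The reasoning ``$c$ only increases when it lies strictly below the threshold'' only gives $c(T)<\max_{n\le T}n/\mu(n)+1$. Because the threshold drops at every factorial, $c$ overshoots it there. Concretely, your rule gives $c(22)=c(23)=c(24)=8$, while $24/\mu(24)+1=7$. At $T=k!$ the overshoot grows like $(k-2)!$, so this is not a rounding issue. You noticed the non-monotonicity but only addressed it for the lower bound.

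The repair is as follows. Fix $T\in[k!,(k+1)!)$.
\begin{itemize}
\item $n/\mu(n)$ increases within each block, and $\sup_{n<k!}n/\mu(n)<k!/(k-1)$.
\item Hence $\max_{n\le T}n/\mu(n)\le \frac{k}{k-1}\cdot\frac{T}{k}\le \frac{2T}{\mu(T)}$.
\item Therefore $c(T)\le 2T/\mu(T)+1\le 3T/\mu(T)$.
\end{itemize}

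\textbf{The lower bound is circular in the case $A_T\in S_*$.} The inequality $c(n_0-1)\ge n_0/\mu(n_0)-1$ is exactly what has to be proved, and you only assert it. The conclusion $c(n_0)\ge n_0/\mu(n_0)$ also does not follow. What you need is the invariant $c(n)\ge n/\mu(n)-1$ for all $n\ge 1$, proved by induction:
\begin{itemize}
\item If $A_n\notin S_*$, then $c(n)=c(n-1)\ge n/\mu(n)$ by the rule itself.
\item If $A_n\in S_*$, then $c(n)=c(n-1)+1\ge (n-1)/\mu(n-1)\ge n/\mu(n)-1$. This holds because the threshold rises by $1/k\le 1$ per step inside a block and falls at $n=k!$.
\end{itemize}
This yields $c(T)\ge T/\mu(T)-1\ge T/(2\mu(T))$ once $T/\mu(T)\ge 2$.

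With these two invariants the greedy construction works. Compared with the paper's explicit progression blocks, the price is precisely this bookkeeping across the factorial jumps, which the explicit choice of $P$ avoids.
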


\begin{proof}
Let \[P=\bigcup_{k=2}^\infty\{k!+ik\colon i=0,\ldots, k!-1\}\subset\mathbb N.\]
Although the following estimate is an immediate consequence of the proof of \cite[Lemma~3.2]{NT}, we include the details for the reader's convenience.

For every sufficiently large $T\geq 1$, we have
\begin{equation}\label{eq-lem33-1}
\frac{T}{2\mu(T)}\leq \#(P\cap[1,T])\leq \frac{3T}{\mu(T)}.
\end{equation}
Indeed, for every $T\geq 1$ let $k\geq 1$ be the integer such that $T\in [k!,(k+1)!)$. Then for all sufficiently large $k\ge 1$
\[\begin{split}
\#(P\cap[1,T])
&= \#(P\cap[1,(k-1)!))+\#(P\cap[(k-1)!,k!))+\#(P\cap[k!,T]) \\
&\leq \#(P\cap[1,(k-1)!))
   +\frac{\#([(k-1)!,k!)\cap\mathbb N)}{k-1}
   +\frac{\#([k!,T]\cap\mathbb N)}{k}+1 \\
&\leq (k-1)!+(k-1)!+\frac{\#([k!,T]\cap\mathbb N)}{k} \\
&\leq \frac{3T}{\mu(T)},
\end{split}\]
and
\[\begin{split}
\#(P\cap[1,T])
&\geq \sum_{i=2}^{k-1}\frac{\#([i!,(i+1)!)\cap\mathbb N)}{i}
   +\frac{\#([k!,T]\cap\mathbb N)}{k}-1 \\
&\geq \sum_{i=2}^{k-1}\frac{\#([i!,(i+1)!)\cap\mathbb N)}{k}
   +\frac{\#([k!,T]\cap\mathbb N)}{k}-1 \\
&= \frac{\#([2,T]\cap\mathbb N)}{k}-1 \\
&\geq \frac{T}{2\mu(T)},
\end{split}\]
which implies \eqref{eq-lem33-1}.
Set
\[
S_{\ast}=\{A_n\in S\colon n\in P\}.
\]
Then for every $T\geq 1$,
\[
\#\{1\leq n\leq T\colon A_n\in S_{\ast}\}=\#(P\cap[1,T]).
\]
Hence, \eqref{seed-eq3} follows from \eqref{eq-lem33-1}.

For each integer $N\geq1$ put
$T_N=\max\{n\geq1\colon {\rm deg}A_n\leq N\}$ for $N$ satisfying $Q_N(S)\neq \emptyset$.
Since ${\rm deg}A_n\to \infty$ monotonically, we have $n\leq T_N$ implies ${\rm deg}A_n\leq N.$ By the maximality of  $T_N$, we have  $n> T_N$ implies ${\rm deg}A_n> N.$ Combining this we have $Q_N(S)=\{A_1,..., A_{T_N}\}$ and hence\begin{equation}\label{claim1}\#Q_N(S)=T_N .\end{equation}
Then \eqref{claim1} implies 
\begin{equation}\label{claim2}\#Q_N(S_*) =\#\{1\leq n\leq
T_N\colon A_n\in S_* \}.\end{equation}
By \eqref{claim1}, \eqref{claim2} and the upper bound in \eqref{seed-eq3}, for all sufficiently large $N$ we have
\[\frac{\#Q_N(S_*) }{\#Q_N(S) }=\frac{\#\{1\leq n\leq
T_N\colon A_n\in S_* \}}{T_N }
\leq\frac{3}{\mu(T_N )}.\]
Letting $N\to\infty$ yields $\overline{d}_q(S_*|S)=0$ as required. \end{proof}

Lemma~\ref{find-C} provides a subset $S_{\ast}\subset S$ which is sparse enough to satisfy (A2), yet still rich enough to leave many admissible digits in the degree windows used for the seed set construction. Let $t\ge 3$ be a natural number. For $n\ge 1$, let
\[\mathcal C_n=\{A\in S_{\ast}\colon (2n)^t\le \deg A < (2n+1)^t\}.\]
The next lemma shows that each $\mathcal C_n$ contains sufficiently many elements for all $n$ if we take $t$ sufficiently large.

\begin{lem}
\label{Cardi}
There exist a natural number $t\ge 3$ and constants $c_0>0$ and $\rho>0$ such that for all
integers $n\ge 1$,
\[
\# \mathcal C_n \ge c_0\frac{q^{(2n+1)^t/\alpha}}{n^{\rho}}.
\]
\end{lem}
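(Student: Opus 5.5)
We count elements of $S_\ast$ in a degree window by taking differences of the counting function from Lemma~\ref{find-C}, and compare these counts using the growth density bounds. With the enumeration $S=\{A_n\}$ of Lemma~\ref{find-C}, set $T_N=\#Q_N(S)$ as in \eqref{claim1}. Then
\[
\#\mathcal C_n=\#\{T_{M_1-1}<m\le T_{M_2}\colon A_m\in S_\ast\},
\]
where $M_1=(2n)^t$ and $M_2=(2n+1)^t-1$. Applying \eqref{seed-eq3} at $T=T_{M_2}$ and $T=T_{M_1-1}$ gives
\[
\#\mathcal C_n\ \ge\ \frac{T_{M_2}}{2\mu(T_{M_2})}-\frac{3T_{M_1-1}}{\mu(T_{M_1-1})}.
\]
This bound is valid only for large $n$, so finitely many small $n$ will be absorbed by shrinking $c_0$. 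This is possible provided each $\mathcal C_n\neq\emptyset$, which I address below.

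The key steps are as follows.

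First, by growth density (A1),
\[
T_{M_2}\ge \gamma_1 q^{M_2/\alpha}M_2^{-\beta}
\quad\text{and}\quad
T_{M_1-1}\le \gamma_2 q^{(2n)^t/\alpha}(2n)^{-t\beta}.
\]
Since $(2n+1)^t-(2n)^t\ge t(2n)^{t-1}\to\infty$, the ratio $T_{M_1-1}/T_{M_2}$ is at most $q^{-t(2n)^{t-1}/\alpha+O(\log n)}$, which decays super-polynomially.

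Second, I compare the two $\mu$ values. Since $\mu$ is nondecreasing, $1/\mu(T_{M_1-1})\le 1$ trivially, while by Lemma~\ref{simple} with $b=q^{1/\alpha}\cdot(\text{const})$ (or $b=q$, after adjusting if $q^{1/\alpha}\le e$), $\mu(T_{M_2})\le (C M_2)^{\sigma}\le C'n^{t\sigma}$. Hence
\[
\#\mathcal C_n\ \ge\ T_{M_2}\Bigl(\frac{1}{2C'n^{t\sigma}}-3\,\frac{T_{M_1-1}}{T_{M_2}}\Bigr)\ \ge\ \frac{T_{M_2}}{4C'n^{t\sigma}}
\]
for large $n$, because the subtracted term decays super-polynomially.

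Third, $T_{M_2}\ge \gamma_1 q^{((2n+1)^t-1)/\alpha}(2n+1)^{-t\beta}$, which equals $q^{(2n+1)^t/\alpha}$ up to the constant $q^{-1/\alpha}$ and a polynomial factor in $n$. This gives the claim with $\rho=t\sigma+t\beta$.

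The main obstacle is the technical use of Lemma~\ref{simple}. It needs the argument to be of the form $b^m$ with $b>e$, whereas $T_{M_2}$ is only bounded above by $\gamma_2 q^{M_2/\alpha}$. I would use the monotonicity of $\mu$ and bound $T_{M_2}\le b^{M_2}$ for a fixed $b>\max(e,q)$, say $b=3q$, valid for large $n$. Then
\[
\mu(T_{M_2})\le\mu(b^{M_2})\le (M_2\log b)^\sigma,
\]
which is polynomial in $n$.

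The other subtlety is nonemptiness of $\mathcal C_n$ for small $n$, needed to get a uniform $c_0$. Here the choice of $t$ matters. I would take $t\ge3$ large enough that the asymptotic inequalities above already hold for all $n\ge1$: each window then starts beyond the threshold $N_0$ where growth density and \eqref{seed-eq3} are valid, and the gap $t(2n)^{t-1}$ dominates the logarithmic terms even at $n=1$. Then no exceptional $n$ remains, and $c_0$ can be taken uniform.
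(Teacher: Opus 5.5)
Your proposal is correct and follows essentially the same route as the paper. The paper writes $\#\mathcal C_n$ as the difference of the counts of $S_\ast$ up to degrees $(2n+1)^t-1$ and $(2n)^t-1$, applies the two-sided bound from Lemma~\ref{find-C}, controls $\mu$ via growth density and Lemma~\ref{simple} (with $\mu\ge 1$ for the subtracted term), and takes $t$ large, arriving at the same $\rho=(\beta+\sigma)t$. Your treatment of the requirement $b>e$ in Lemma~\ref{simple}, via monotonicity of $\mu$ with $b=3q$ (the paper effectively uses $b=q$, which fails for $q=2$), and of the uniformity in $n\ge 1$ is, if anything, more explicit than the paper's.
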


\begin{proof}
By Lemma~\ref{find-C} for all sufficiently large $N$,
\begin{equation}\label{eq-lem34-1}
\frac{\#Q_N(S)}{2\mu(\#Q_N(S))} \le \#Q_N(S^{\ast}) \le \frac{3\#Q_N(S)}{\mu(\#Q_N(S))}.
\end{equation}

For each $n,$ by the definition of $\mathcal C_n$ we have
\[\#\mathcal C_n = \#Q_{(2n+1)^t-1}(S^{\ast}) - \#Q_{(2n)^t-1}(S^{\ast}).\]
By \eqref{eq-lem34-1} and taking $t$ sufficiently large, for all $n$ we obtain
\begin{equation}\label{eq-lemma34-2}
\#\mathcal C_n\ge\frac{\#Q_{(2n+1)^t-1}(S)}{2\mu(\#Q_{(2n+1)^t-1}(S))}-\frac{3\#Q_{(2n)^t-1}(S)}{\mu(\#Q_{(2n)^t-1}(S))}.\end{equation}
Since $S$ has growth density with exponent $\alpha$, there exist constants
$\gamma_1,\gamma_2>0$ and $\beta\ge 0$ such that for all sufficiently large $N$,
\[\gamma_1\frac{q^{N/\alpha}}{N^\beta}\le \#Q_{N}(S) \le\gamma_2\frac{q^{N/\alpha}}{N^\beta}.\]
Hence, taking $t$ sufficiently large again, there exist some constants $c_1,c_2>0$ such that for all $n$,
\begin{equation}\label{eq-lemma34-3}
\#Q_{(2n+1)^t-1}(S)\ge c_1 \frac{q^{(2n+1)^t/\alpha}}{n^{\beta t}},\
\#Q_{(2n)^t-1}(S)\le c_2 \frac{q^{(2n)^t/\alpha}}{n^{\beta t}}.
\end{equation}

Since
$\#Q_{(2n+1)^t-1}(S)\leq q^{(2n+1)^t}$, by Lemma~\ref{simple} it follows that there exist $c_3>0$ and $\sigma>0$ such that for all $n$,
\begin{equation}\label{eq-lem34-4}
\mu(\#Q_{(2n+1)^t-1}(S))\leq   c_3 n^{t\sigma}.
\end{equation}

Combining \eqref{eq-lemma34-2}, \eqref{eq-lemma34-3}, and
\eqref{eq-lem34-4}, there exists $t\ge 3$ such that for all
 $n$ we have
\[\#\mathcal C_n\geq 
 c_1c_3^{-1} \frac{q^{(2n+1)^t/\alpha}}{n^{(\beta+\sigma)t}}-
3c_2 \frac{q^{(2n)^t/\alpha}}{n^{\beta t}}.\]
Hence, there exists $c_0>0$ such that for all $n$ we have 
\[\#\mathcal C_n \geq c_0\frac{q^{(2n+1)^t/\alpha}}{n^{(\beta+\sigma)t}},\]
which completes the proof.
\end{proof}

%Lemma~\ref{Cardi} shows that each degree window contains exponentially many admissible digits, up to a polynomial loss. 
We now set
\begin{equation}
\label{Fset}F=\bigcap_{n= 1}^\infty\bigcup_{\substack{ A_i\in \mathcal C_i\\ 1\leq i\leq n}}{I(A_{1},\ldots, A_n)}.\end{equation}
Here, choose $t$ sufficiently large so that the conclusion of Lemma~\ref{Cardi} holds.
The next lemma equips
this set with a natural probability measure and yields the required lower bound
for its Hausdorff dimension.
\begin{lem}\label{gibbsnew}

There exists a Borel probability measure $\nu$ supported on $F$ such that for all $(A_n)_{n=1}^{\infty}\in \prod_{n=1}^{\infty}\mathcal C_n$ we have
\[\liminf_{n\to \infty}\frac{\log \nu(I(A_1,..., A_n))}{\log |I(A_1,..., A_n)|}\ge \frac{1}{2\alpha}.\]

\end{lem}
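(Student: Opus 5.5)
The measure will be the uniform Bernoulli-type measure on the cylinders of $F$. For each $n$ and each $(A_1,\ldots,A_n)\in\prod_{i=1}^n\mathcal C_i$, set
\[
\nu(I(A_1,\ldots,A_n))=\prod_{i=1}^n\frac{1}{\#\mathcal C_i}.
\]

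\emph{Existence of $\nu$.} I first check that this is a consistent assignment on a nested family of discs.
\begin{itemize}
\item The sets $\mathcal C_n$ are finite and nonempty by Lemma~\ref{Cardi}.
\item By Lemma~\ref{proper} and Remark~\ref{remnon}, distinct cylinders of the same level are disjoint clopen discs.
\item Each level-$n$ cylinder is partitioned by its level-$(n+1)$ children, so the assignment is additive.
\end{itemize}
By Kolmogorov consistency (or Carathéodory on the algebra generated by the cylinders), $\nu$ extends to a Borel probability measure. Since the digits in $\mathcal C_n$ lie in pairwise disjoint degree windows, all digits are automatically distinct. Compactness of the clopen cylinders then gives $\nu(F)=1$, because $F$ is the decreasing intersection of finite unions of cylinders each of full mass.

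\emph{Upper bound on the measure.} By Lemma~\ref{Cardi},
\[
-\log\nu(I(A_1,\ldots,A_n))=\sum_{i=1}^n\log\#\mathcal C_i\ \ge\ \sum_{i=1}^n\Big(\frac{(2i+1)^t}{\alpha}\log q-\rho\log i+\log c_0\Big).
\]

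\emph{Bound on the diameter.} Since $\deg A_i<(2i+1)^t$ for $A_i\in\mathcal C_i$, Lemma~\ref{proper} gives
\[
-\log|I(A_1,\ldots,A_n)|=\Big(2\sum_{i=1}^n\deg A_i+1\Big)\log q\ \le\ \Big(2\sum_{i=1}^n(2i+1)^t+1\Big)\log q.
\]

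\emph{The ratio.} Taking the quotient, the leading terms give
\[
\frac{\sum_{i\le n}(2i+1)^t/\alpha}{2\sum_{i\le n}(2i+1)^t+1}\to\frac{1}{2\alpha}.
\]
The error terms $\sum_{i\le n}(\rho\log i+|\log c_0|)=O(n\log n)$ are negligible, since the denominator is of order $n^{t+1}$. Hence the liminf is at least $1/(2\alpha)$.

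The main obstacle is none beyond ensuring that the lower bound in Lemma~\ref{Cardi} uses the upper window endpoint $(2n+1)^t$. That is exactly what makes the numerator match the worst-case diameter, so no loss occurs. Had only $(2n)^t$ been available, the ratio would be off by the factor $\sum(2i)^t/\sum(2i+1)^t\to1$, which is still fine.
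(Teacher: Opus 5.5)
Your proposal is correct and follows essentially the same route as the paper: the uniform product measure on $\prod_n\mathcal C_n$ obtained via Kolmogorov extension, the lower bound $\#\mathcal C_n\ge c_0q^{(2n+1)^t/\alpha}n^{-\rho}$ from Lemma~\ref{Cardi} to bound the cylinder masses, the bound $\deg A_i<(2i+1)^t$ with Lemma~\ref{proper} to bound the diameters, and a direct comparison of the leading terms in the ratio. One minor wording point: the level-$(n+1)$ cylinders whose last digit lies in $\mathcal C_{n+1}$ do not literally partition the level-$n$ cylinder; what Kolmogorov extension needs, and what holds, is that their masses sum to the parent's mass.
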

\begin{proof}
For each $n\in\mathbb N$ let $P_n$
be a measure on $\Delta$ such that \[P_n(I(A_1,..., A_n))=\frac{1}{\# \mathcal C_1\cdots \#\mathcal C_n} \ \text{for}\ (A_1,..., A_n)\in \mathcal C_1\times\cdots \times \mathcal C_n.\]
By Lemma~\ref{Cardi}
there exist $c_0>0$ and $\rho>0$ such that for all  $n\ge 1$,
\[
\# \mathcal C_n \ge c_0\frac{q^{(2n+1)^t/\alpha}}{n^{\rho}}.
\]
Hence, for all $n\ge 1$, for each  $(A_1,..., A_n)\in \mathcal C_1\times\cdots \times \mathcal C_n$ we have 
\[P_n(I(A_1,..., A_n))\leq \frac{(n!)^{\rho}}{c_0^n q^{\alpha^{-1}\sum_{j=1}^n(2j+1)^t }}.\]
By Kolmogorov's extension theorem,
there is a Borel probability measure $\nu$ supported on $F$ such that for all $n\in\mathbb N$ and all $(A_1,..., A_n)\in \mathcal C_1\times\cdots \times \mathcal C_n$, 
\begin{equation}\label{mu-ineq}\nu(I(A_1,..., A_n))\leq \frac{(n!)^{\rho}}{c_0^n q^{\alpha^{-1}\sum_{j=1}^n(2j+1)^t }}.\end{equation}
For $1\leq j\leq n$, 
the definition of $\mathcal C_j$ gives $\deg A_j<(2j+1)^t$.  By this and Lemma~\ref{proper}, 
\begin{equation}\label{mu-ineq3}|I(A_1,\ldots, A_n)|=q^{-2\sum_{j=1}^n {\rm deg}A_j-1}>q^{-2\sum_{j=1}^n (2j+1)^t-1}.\end{equation}
Combining \eqref{mu-ineq} and \eqref{mu-ineq3}, for all $n\ge 1$ we have
\begin{equation*}
\begin{split}\frac{\log \nu(I(A_1,..., A_n))}{\log |I(A_1,..., A_n)|}\ge 
\frac{n\log c_0+\alpha^{-1}\log q\sum_{j=1}^{n}(2j+1)^t-\rho \sum_{j=1}^n \log j}{2\log q\sum_{j=1}^{n}(2j+1)^t+\log q},
\end{split}
\end{equation*}
which implies \[\liminf_{n\to \infty}\frac{\log \nu(I(A_1,..., A_n))}{\log |I(A_1,..., A_n)|}\ge \frac{1}{2\alpha},\]as required.

\end{proof}
We are now in a position to prove Proposition~\ref{seed-Prop}.
\begin{proof}[Proof of Proposition~\ref{seed-Prop}]
Let $S_{\ast}$ be as in Lemma~\ref{find-C}, choose $t$ sufficiently large so that the conclusion of Lemma~\ref{Cardi} holds and let $F$ be the set defined by (\ref{Fset}). 
By construction of $F$ we have $F\subset R_t(S_{\ast})$.
Since $S$ has growth density with exponent $\alpha\geq1$ and $\overline{d}_q(S_{\ast}|S)=0$ we verify (A1) and (A2). Take $x\in F$ and $0<r<1$. Choose $n$ such that
\[|I(A_1(x),\dots,A_n(x), A_{n+1}(x))|\le r < |I(A_1(x),\dots,A_n(x))|.\]
Since $I(A_1(x),\dots,A_n(x))$ is a disc and $x\in I(A_1(x),\dots,A_n(x))$, by Remark~\ref{remnon}
\[B(x,r)\subset I(A_1(x),\dots,A_n(x)).\]
Let $\nu$ be the Borel probability measure from Lemma~\ref{gibbsnew}.
Then
\[\frac{\log \nu(B(x,r))}{\log r}\ge \frac{\log \nu (I(A_1(x),\dots,A_n(x)))}{\log |I(A_1(x),\dots,A_n(x), A_{n+1}(x))|}.\]
Since $(2n)^t\le {\rm deg}A_n(x) < (2n+1)^t$
for every\ $n \ge 1$, by Lemma~\ref{proper} we have \[\frac{\log |I(A_1(x),\dots,A_n(x), A_{n+1}(x))|}{\log |I(A_1(x),\dots,A_n(x))|}\to 1 \ \text{as} \ n\to \infty.\]
Combining this with Lemma~\ref{gibbsnew} yields
\[\liminf_{r\to 0}\frac{\log \nu(B(x,r))}{\log r}\ge \liminf_{n\to \infty}\frac{\log \nu (I(A_1(x),\dots,A_n(x)))}{\log |I(A_1(x),\dots,A_n(x), A_{n+1}(x))|}\ge 1/(2\alpha).\]
By Lemma~\ref{mass1}, this implies $\dim_{\rm H}R_t(S_{\ast})\ge \dim_{\rm H} F \ge 1/(2\alpha)$.

On the other hand,
\[R_t(S^*)\subset
\{x\in E \colon A_n(x)\in S\text{ for all }n\ge 1 \text{ and } \deg A_n(x)\to\infty\}.\]
By Proposition~\ref{dim_Hirst} we obtain 
$\dim_{\rm H} R_t(S_{\ast})\le {1}/({2\alpha}).$
Hence,
\[\dim_{\rm H} R_t(S_{\ast})=\frac{1}{2\alpha},\] which verifies (A3).
This completes the proof.

\end{proof}

\subsection{Insertion arguments}\label{insert-sec}
We next move to insertion arguments.
Let $V$, $Z$ be infinite subsets of $\mathbb F_q^{1}[X]$.
Let $t\ge 3$ be a natural number and suppose $R_{t}(Z)\neq\emptyset$. 
We add elements of $V$ into the digit sequences of points in the seed set $R_{t}(Z)$, and construct a new subset of $\Delta$ in the following manner.

Let $M_0=0$, and let $(M_k)_{k=1}^\infty$ be a strictly increasing sequence of positive integers. Let $(W_k)_{k=1}^\infty$ be a sequence of finite subsets of $V$. Write \[W_k=\{w^{(k)}_i\colon i=1,\ldots,i(k)\},\ \deg w^{(k)}_1\le \cdots \le \deg w^{(k)}_{i(k)}.\]
Let $y\in R_{t}(Z).$
For each $k\geq1$ with $W_k\neq\emptyset$, we
 add the elements of $W_{k}$ into the digit sequence $(A_n(y))_{n=1}^\infty$ to define a new sequence 
 \[\begin{split}&\ldots,A_{M_{k}-1}(y),\ A_{M_{k}}(y),\ w_{1}^{(k)},\ldots,w_{i(k)}^{(k)},\ A_{M_{k}+1}(y),\ldots\\
&\ldots,A_{M_{k+1 }}(y),\ w_{1}^{(k+1)},\ldots,w_{i(k+1)}^{(k+1)},\ A_{M_{k+1 }+1}(y),\ldots\end{split}\]
Let $x(y)$ denote the point in $\Delta$ whose continued fraction expansion \eqref{LSCF2} is given by this new sequence. Let $R_{t}(Z,V,(M_k)_{k=1}^\infty,(W_k)_{k=1}^\infty)$ denote the collection of these points:
\[R_{t}(Z,V,(M_k)_{k=1}^\infty,(W_k)_{k=1}^\infty)=\{x(y)\in \Delta\colon y\in R_{t}(Z)\}.\] 
The map $y\in R_{t}(Z)\mapsto x(y)\in R_{t}(Z,V,(M_k)_{k=1}^\infty,(W_k)_{k=1}^\infty)$ is bijective.
Let \[f_{t,Z,V,(M_k)_{k=1}^\infty,(W_k)_{k=1}^\infty}\colon  R_{t}(Z,V,(M_k)_{k=1}^\infty,(W_k)_{k=1}^\infty)\to R_{t}(Z)\] denote the inverse of this map. We call $f_{t,Z,V,(M_k)_{k=1}^\infty,(W_k)_{k=1}^\infty}$ an {\it elimination map}. 
Clearly, if $V\cap Z=\emptyset$ and the elements of $(W_k)_{k=1}^\infty$ are pairwise disjoint then $R_{t}(Z,V,(M_k)_{k=1}^\infty,(W_k)_{k=1}^\infty)$ is contained in $E$. Moreover, the elimination map $f$ sends $x\in R_{t}(Z,V,(M_k)_{k=1}^\infty,(W_k)_{k=1}^\infty)$ to $f(x)\in R_{t}(Z)$ whose digit sequence is given by  eliminating elements of $\bigcup_{k=1}^\infty W_k$ from the digit sequence of $x$. 

To preserve the Hausdorff dimension under the insertion procedure, it is enough
to show that the associated elimination map is almost Lipschitz. The next
proposition gives a sufficient condition for this.

\begin{pro}\label{pro-almLip}
Let $(\varepsilon_k)_{k=1}^\infty$ be a decreasing sequence of positive reals converging to $0$.
Let $V$, $Z$ be infinite subsets of $\mathbb F_q^1[X]$.
Let $t\ge 3$ be a natural number and suppose $R_{t}(Z)\neq\emptyset$. Let $M_0=0$ and let $(M_k)_{k=1}^\infty$ be a strictly increasing sequence of positive integers, and
let $(W_k)_{k=1}^\infty$ be a sequence of finite subsets of 
$V$ such that for every $k\geq1$, 
\begin{equation}\label{condi-Lip-1}\sum_{\omega\in W_k}\deg \omega\le \varepsilon_k \sum_{i=M_{k-1}+1}^{M_k}(2i)^t\ \text{ if }W_k\neq\emptyset\end{equation}
and
\begin{equation}\label{condi-Lip-2} 
\sum_{i=1}^{n+1}(2i+1)^t\le (1+\varepsilon_k)\sum_{i=1}^n(2i)^t\ \text{ for every }n\geq M_k.\end{equation}
Then the elimination map $f=f_{t,Z,V,(M_k)_{k=1}^\infty,(W_k)_{k=1}^\infty}$ is almost Lipschitz.
 \end{pro}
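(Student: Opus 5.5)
Fix $\gamma\in(0,1)$. We aim for a constant $C>0$ such that $\|f(x)-f(x')\|_q\le C\|x-x'\|_q^{\gamma}$ for all $x\ne x'$ in $R:=R_{t}(Z,V,(M_k)_{k=1}^\infty,(W_k)_{k=1}^\infty)$. The whole argument is an ultrametric computation with cylinders, using only Lemma~\ref{proper} and Remark~\ref{remnon}.

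\emph{Step 1: both distances are exact cylinder diameters.} Let $x\neq x'$ in $R$, and let $m\ge 0$ be the length of the longest common prefix of their digit sequences, so $A_i(x)=A_i(x')$ for $i\le m$ and $A_{m+1}(x)\ne A_{m+1}(x')$. Both points lie in $I(A_1(x),\dots,A_m(x))$, and they lie in the disjoint discs $I(\dots,A_{m+1}(x))$ and $I(\dots,A_{m+1}(x'))$. By the non-Archimedean structure I expect
\[\|x-x'\|_q=|I(A_1(x),\dots,A_m(x))|=q^{-2\sum_{i\le m}\deg A_i(x)-1}.\]
The exact value (as opposed to an upper bound) should come from the fact that $x\mapsto 1/x$ scales distances in a controlled way on each cylinder. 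Also, $\deg A_{m+1}\ge1$ means the first-level digit $[1/x]$ determines the top coefficients, so differing digits force a separation of exactly the cylinder size. If only a comparison up to a bounded factor holds, that suffices.

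Since $y=f(x)$ and $y'=f(x')$ are obtained by deleting the inserted digits, their common prefix has length $m'$, where $m'$ is the number of seed digits among the first $m$ digits of $x$. Differences arising at inserted positions still force differing continuations. Hence
\[\|f(x)-f(x')\|_q\le q^{-2\sum_{i\le m'}\deg A_i(y)}.\]

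\emph{Step 2: the exponent inequality.} It suffices to show that
\[\sum_{i\le m}\deg A_i(x)\le \gamma^{-1}\sum_{i\le m'}\deg A_i(y)+O(1).\]
The left side equals the seed part $\sum_{i\le m'}\deg A_i(y)$ plus the degrees of the inserted elements among the first $m$ digits. Let $k$ be the index with $M_k\le m'<M_{k+1}$. The inserted digits then come from $W_1,\dots,W_{k+1}$, with $W_{k+1}$ possibly only partially included. By \eqref{condi-Lip-1}, the contribution of $W_j$ is at most $\varepsilon_j\sum_{i=M_{j-1}+1}^{M_j}(2i)^t$.

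For $j\le k$ these blocks sum to at most $\sum_j \varepsilon_j\sum_{\text{block }j}(2i)^t$. Since $\varepsilon_j\to0$, this is $o\big(\sum_{i\le m'}(2i)^t\big)$: split at a fixed $j_0$ and use a Cesàro-type argument. The bound $(2i)^t\le\deg A_i(y)$ then gives $o\big(\sum_{i\le m'}\deg A_i(y)\big)$.

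\emph{Main obstacle: the partial block $W_{k+1}$.} Here $m'$ lies between $M_k$ and $M_{k+1}$, yet \eqref{condi-Lip-1} bounds $W_{k+1}$ only by $\varepsilon_{k+1}\sum_{i=M_k+1}^{M_{k+1}}(2i)^t$, which may be much larger than the seed sum up to $m'$. The key observation is that $W_{k+1}$ is inserted after position $M_{k+1}$ of the seed sequence. So it appears among the first $m$ digits only if $m'=M_{k+1}$, which means $m'\ge M_{k+1}$ and we are in the previous case with $k+1$ in place of $k$. Condition \eqref{condi-Lip-2} is then used to absorb boundary effects. Whenever the first differing digit is a seed digit at position $n+1$ with $n\ge M_k$, we compare $\sum_{i\le n+1}(2i+1)^t$ against $\sum_{i\le n}(2i)^t$ with ratio $1+\varepsilon_k$. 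This accounts for the gap between the diameters of consecutive seed cylinders, and also covers a coarser version of Step 1 in which one only has $\|x-x'\|_q\ge|I(A_1,\dots,A_{m+1})|$.

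\emph{Conclusion.} Altogether the exponents satisfy the Step 2 inequality with factor $(1+o(1))$ as $m\to\infty$, which is eventually below $\gamma^{-1}$. For the finitely many small $m$, the constant $C$ absorbs the discrepancy. This gives Hölder continuity with exponent $\gamma$ for every $\gamma<1$, so $f$ is almost Lipschitz.
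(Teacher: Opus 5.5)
Your overall scheme matches the paper's. You compare the cylinder containing $x,x'$ with the one containing $y,y'$. You absorb $W_1,\dots,W_k$ and the short-prefix case into a constant, bound the later blocks by $\varepsilon_k\sum_{i\le n}(2i)^t$ via \eqref{condi-Lip-1}, and invoke \eqref{condi-Lip-2}.

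However, Step~1 is false as stated, and it fails even up to a bounded factor. Suppose $x,x'$ share $A_1,\dots,A_m$ and have $(m+1)$-th digits $B\neq B'$. Writing $x=(p_m+zp_{m-1})/(q_m+zq_{m-1})$ with $z=T^m x$ gives
\[
\|x-x'\|_q=q^{-2\sum_{i\le m}\deg A_i}\,\frac{\|B-B'\|_q}{\|B\|_q\,\|B'\|_q}.
\]
So for $B'=B+1$ of degree $d$ you get $\|x-x'\|_q=q^{1-2d}\,|I(A_1,\dots,A_m)|$. In your setting $B=A_{n+1}(y)$ is a seed digit with $d\ge(2n+2)^t$, where $n=m'$, so the discrepancy is unbounded. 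A differing digit in general only separates the two points at the scale of the $(m+1)$-level cylinder, not the $m$-level one. Hence the \emph{coarser version} $\|x-x'\|_q\ge|I(A_1(x),\dots,A_{m+1}(x))|$, which you mention only at the end, is the only valid bound and has to be the main line. A warning sign: if your exact formula held, \eqref{condi-Lip-2} would never be needed, since \eqref{condi-Lip-1} and the lower window bounds alone would finish the argument.

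With the correct bound, the exponent to control is $\sum_{i\le n+1}\deg A_i(y)+\sum_{j\le\ell}\sum_{w\in W_j}\deg w$. The extra term is $\deg A_{n+1}(y)$, and the upper window bound $\deg A_{n+1}(y)<(2n+3)^t$ (which your Step~2 never invokes) is exactly what \eqref{condi-Lip-2} absorbs: $\sum_{i\le n+1}(2i+1)^t\le(1+\varepsilon_k)\sum_{i\le n}(2i)^t$. On the other side, $\|y-y'\|_q\le q^{-1-2\sum_{i\le n}(2i)^t}$ by the lower window bounds. Once Step~1 is replaced this way, your argument coincides with the paper's.

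Two smaller remarks:
\begin{itemize}
\item The \emph{partial block} worry is vacuous. All points of the set carry the same inserted digits at the same positions, so the first disagreement is always at a seed digit $A_{n+1}(y)$. The common prefix then contains $W_1,\dots,W_\ell$ completely, where $M_\ell\le n<M_{\ell+1}$.
\item The short-prefix case works not merely because there are ``finitely many small $m$''. It works because the windows bound the degrees of the first $M_k+1$ seed digits, which gives a uniform lower bound $\delta_k>0$ for $\|x-x'\|_q$.
\end{itemize}
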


\begin{proof}

Let us abbreviate $R_{t}(Z)$,
 $R_{t}(Z,V,(M_k)_{k=1}^\infty,(W_k)_{k=1}^\infty)$
 to  $R(Z)$,
 $R(Z,V)$ respectively.
Let $k\in\mathbb N$.
For a pair $y_1,y_2$ of distinct points in $R(Z)$, let
\[s(y_1,y_2)=\min \{n\ge 0\colon  A_{n+1}(y_1)\neq A_{n+1}(y_2)\}.\]
For each $j\ge 1$ set $m_j=\# W_j.$
If $s(y_1,y_2)<M_k$, then the points $x_i:=f^{-1}(y_i)$ differ within the first
$M_k+\sum_{j=1}^k m_j$ digits. Since only finitely many words of length $M_k+\sum_{j=1}^k m_j$ can occur as initial blocks of points in $R(Z,V)$, there exists $\delta_k>0$ such that if $s(y_1,y_2)<M_k$, then
$\|x_1-x_2\|_q \ge \delta_k.$
 Since $\|y_1-y_2\|_q\le 1$, it follows that
\begin{equation}\label{eq-pro36-1}
\|y_1-y_2\|_q \le \delta_k^{-1}\|x_1-x_2\|_q\ \text{if } s(y_1,y_2)<M_k.\end{equation}

Let $x_1,x_2\in R(Z,V)$ be distinct and put $y_1:=f(x_1),\ y_2:=f(x_2).$
Assume that $n:=s(y_1,y_2)\ge M_k.$
Let $\ell \ge k$ be the unique integer such that
$M_\ell\le n < M_{\ell+1}.$
Then $x_1$ and $x_2$ have the same first $n+m_1+\cdots+m_\ell$
digits, and their next digits are $A_{n+1}(y_1)$ and $A_{n+1}(y_2)$, respectively.
Hence, \[x_1\in I(A_1(x_1),\ldots, A_{n+m_1+\cdots +m_\ell}(x_1), A_{n+1}(y_1)),\] and \[x_2\notin I(A_1(x_1),\ldots, A_{n+m_1+\cdots +m_\ell}(x_1), A_{n+1}(y_1)).\]
By this, Lemma~\ref{proper}, and Remark~\ref{remnon}, we obtain
\[\|x_1-x_2\|_q\ge |I(A_1(x_1),\ldots, A_{n+m_1+\cdots +m_\ell}(x_1), A_{n+1}(y_1))|.\]
Applying Lemma~2.1 again, we have
\[\|x_1-x_2\|_q\ge
q^{-1}q^{-2\left(\sum_{i=1}^{n+1}\deg A_i(y_1)+\sum_{j=1}^\ell\sum_{w\in W_j}\deg w\right)}.\]
Since $\deg A_i(y_1)<(2i+1)^t$ for every $i$, we have
\begin{equation}\label{eq-pro36-2}
\|x_1-x_2\|_q
\ge
q^{-1}q^{-2 \sum_{i=1}^{n+1}(2i+1)^t}q^{-2\sum_{j=1}^\ell\sum_{w\in W_j}\deg w}.\end{equation}
By \eqref{condi-Lip-1} and the monotonicity of $(\varepsilon_j)_{j=1}^{\infty}$, we have
\[
\sum_{j=1}^\ell\sum_{w\in W_j}\deg w
\le
\sum_{j=1}^k \sum_{\omega\in W_j}\deg \omega+
\varepsilon_k \sum_{j=k+1}^\ell \sum_{i=M_{j-1}+1}^{M_j} (2i)^t
\le
\sum_{j=1}^k \sum_{\omega\in W_j}\deg \omega+
\varepsilon_k \sum_{i=1}^{n} (2i)^t.
\]
Combining this with \eqref{eq-pro36-2}, we have
\[\|x_1-x_2\|_q
\ge q^{-1}q^{-2 \sum_{i=1}^{n+1}(2i+1)^t}q^{-2\sum_{j=1}^k \sum_{\omega\in W_j}\deg \omega}q^{-2\varepsilon_k \sum_{i=1}^{n} (2i)^t}.\]
The condition \eqref{condi-Lip-2} yields
\begin{equation}\label{eq-pro36-3}
\|x_1-x_2\|_q \ge q^{-1-2\sum_{j=1}^k \sum_{\omega\in W_j}\deg \omega}q^{-2(1+2\varepsilon_k)\sum_{i=1}^n(2i)^t}.
\end{equation}
On the other hand, since $y_1, y_2\in I\bigl(A_1(y_1),\dots,A_n(y_1)\bigr),$ Lemma~\ref{proper} and the lower bounds $\deg A_i(y_1)\ge (2i)^t$ imply
\begin{equation}\label{eq-pro36-4}
\|y_1-y_2\|_q\le\left|I\bigl(A_1(y_1),\dots,A_n(y_1)\bigr)\right|
=q^{-1}q^{-2\sum_{i=1}^n \deg A_i(y_1)}
\le
q^{-1}q^{-2\sum_{i=1}^n (2i)^t}.
\end{equation}
By \eqref{eq-pro36-3} and \eqref{eq-pro36-4},
\begin{equation}
\begin{split}
\|x_1-x_2\|_q
&\ge
q^{-1-2\sum_{j=1}^k \sum_{\omega\in W_j}\deg \omega}q^{-2(1+2\varepsilon_k)\sum_{i=1}^n(2i)^t}\\
&=
q^{2\varepsilon_k-2\sum_{j=1}^k \sum_{\omega\in W_j}\deg \omega}(q^{-1}q^{-2\sum_{i=1}^n(2i)^t})^{1+2\varepsilon_k}\\
&\ge q^{2\varepsilon_k-2\sum_{j=1}^k \sum_{\omega\in W_j}\deg \omega}\|y_1-y_2\|_q^{1+2\varepsilon_k}.
\end{split}
\end{equation}
Hence,
\[\|f(x_1)-f(x_2)\|_q=\|y_1-y_2\|_q
\le C_k \|x_1-x_2\|_q^{1/(1+2\varepsilon_k)},\]
where
\[
C_k:=q^{(2\sum_{j=1}^k \sum_{\omega\in W_j}\deg \omega-2\varepsilon_k)/(1+2\varepsilon_k)}.
\]
Combining this with \eqref{eq-pro36-1}, there exists a
constant $C_k^{\prime}>0$ such that
\begin{equation*}
\|f(x_1)-f(x_2)\|_q \le C_k^{\prime} \|x_1-x_2\|_q^{1/(1+2\varepsilon_k)}
\ \text{for all }\  x_1,x_2\in R(Z,V).
\end{equation*}
Letting $\varepsilon_k\to 0$, $f$ is almost Lipschitz, which is required.\end{proof}

\section{Proof of Main Theorem}\label{sec-proof}
Let $S\subset \mathbb F_q^1[X]$ have growth density with exponent $\alpha\ge 1$, and
let $U\subset S$ satisfy
$\overline{d}_q(U| S)>0.$
For each $d\in \deg(U),$ take an element $A_d\in U$ such that $d=\deg A_d$ and set 
\[U_{\deg}:=\{A_d\in U\colon d\in \deg(U)\}.\] Since $Q_N(U_{\deg})$ satisfies $\# Q_N(U_{\deg})\le N$ for any $N\ge 1,$ it follows that $\tilde{S}:=S\setminus U_{\deg}$ also has growth density with exponent $\alpha\ge 1.$

By Proposition~\ref{seed-Prop}, there exist a subset $S_{\ast}\subset \tilde{S}$ and
a natural number $t\ge 3$ such that $R_t(S_{\ast})$ is an extreme seed set associated with $\tilde{S}$. Since $S_{\ast}\subset \tilde S\subset S$ and 
$\overline{d}_q(S_{\ast}| \tilde S)=0$ we have $\overline{d}_q(S_{\ast}| S)=0$ and 

\begin{equation*}
\overline{d}_q(U\setminus S_{\ast} | S)=\overline{d}_q(U| S).
\end{equation*}

Let $(\varepsilon_k)_{k=1}^\infty$ be a decreasing sequence of positive real
numbers converging to $0$. Choose a strictly increasing sequence
$(N_k)_{k=1}^\infty$ of positive integers such that
\begin{equation}\label{eq-sec4-1}
\lim_{k\to\infty}
\frac{\#Q_{N_k}(U\setminus S_{\ast})}{\#Q_{N_k}(S)}=\overline{d}_q(U\setminus S_{\ast}| S)=\overline{d}_q(U| S).
\end{equation}
Set $N_0:=0$, and for each $k\ge 1$ define
\[W_k:=Q_{N_k}(U\setminus S_{\ast})\setminus Q_{N_{k-1}}(U\setminus S_{\ast}).\]
Then each $W_k$ is a finite subset of $U\setminus S_{\ast}$, the family
$\{W_k\colon k\ge 1\}$ is pairwise disjoint, and
\begin{equation}\label{eq-sec4-2}
\bigcup_{j=1}^k W_j = Q_{N_k}(U\setminus S_{\ast})
\ \text{for all}\ k\ge 1.
\end{equation}

We now construct a strictly increasing sequence $(M_k)_{k=1}^\infty$ inductively.
Set $M_0:=0$. Suppose that $M_0,\dots,M_{k-1}$ have already been chosen. Since
\[\sum_{i=M_{k-1}+1}^{\ell } (2i)^t\to \infty\ \text{as}\ \ell\to \infty,\]
we can choose $M_k>M_{k-1}$ such that
\begin{equation}\label{eq-sec4-3}
\sum_{\omega\in W_k}\deg \omega \le \varepsilon_k \sum_{i=M_{k-1}+1}^{M_k} (2i)^t.\end{equation}
Moreover, since
\[\frac{\sum_{i=1}^{n+1}(2i+1)^t}{\sum_{i=1}^{n}(2i)^t}\to 1 \ \text{as} \ n\to\infty,\]
we may also assume that
\begin{equation}\label{eq-sec4-4}
\sum_{i=1}^{n+1}(2i+1)^t\leq (1+\varepsilon_k)\sum_{i=1}^{n}(2i)^t\ \text{for every } n\ge M_k.\end{equation}
Define
\[E_{S,U}:=R_t\bigl(S_{\ast}, U\setminus S_{\ast},(M_k)_{k=1}^\infty,(W_k)_{k=1}^\infty\bigr).\]
By construction, every point of $E_{S,U}$ is obtained from a point of $R_t(S_{\ast})$ by inserting the same blocks $W_k$ at the position $M_k$.

\begin{cla*}
\begin{equation}\label{hougan}E_{S,U}\subset \{x\in E \colon A_n(x)\in S \text{ for all } n\ge 1\}.
\end{equation}
\end{cla*}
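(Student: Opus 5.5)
Fix $x\in E_{S,U}$ and write $x=x(y)$ with $y\in R_t(S_{\ast})$. I would verify three things: $x$ is irrational (so $x\in\Delta\setminus\mathbb F_q(X)$), every digit of $x$ lies in $S$, and the digits of $x$ are pairwise distinct.

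For irrationality, observe that the inserted sequence is an infinite sequence of elements of $\mathbb F_q^1[X]$. Indeed, every $W_k$ is finite, and the digits of $y$ form an infinite sequence of polynomials of positive degree. By the correspondence recalled after \eqref{LSCF2}, such an infinite sequence determines a point of $\Delta\setminus\mathbb F_q(X)$. Its digits $A_n(x)$ are exactly the terms of the sequence, by uniqueness of the infinite expansion. So the digit sequence of $x$ is the sequence of $A_n(y)$ with the blocks $W_k$ inserted after position $M_k$.

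For the second property, each digit of $x$ is either some $A_n(y)\in S_{\ast}\subset\tilde S\subset S$, or an element of some $W_k\subset U\setminus S_{\ast}\subset U\subset S$. Hence $A_n(x)\in S$ for all $n$.

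For distinctness, I would use the remark after the definition of the elimination map. It says that $R_t(Z,V,(M_k),(W_k))\subset E$ whenever $V\cap Z=\emptyset$ and the $W_k$ are pairwise disjoint. Here $Z=S_{\ast}$ and $V=U\setminus S_{\ast}$, so $V\cap Z=\emptyset$, and the $W_k$ are pairwise disjoint by construction. To spell this out, take two distinct positions in the digit sequence of $x$ and split into cases:
\begin{itemize}
\item If both positions come from $y$, the digits differ because $y\in E$.
\item If one position comes from $y$ and the other from some $W_k$, the digits differ because one lies in $S_{\ast}$ and the other in $U\setminus S_{\ast}$.
\item If both positions come from the same $W_k$, the digits differ because the enumeration $w^{(k)}_i$ of $W_k$ has no repetitions.
\item If the positions come from different $W_k$ and $W_j$, the digits differ because these sets are disjoint.
\end{itemize}

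The step most in need of care is the first one: identifying the digits of $x$ with the inserted sequence. It relies only on uniqueness of infinite Laurent series continued fraction expansions, which I would take as standard (cf. \cite{BN}). Everything else is direct bookkeeping from the definitions.
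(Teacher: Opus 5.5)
Your proof is correct and follows essentially the same route as the paper: the seed digits lie in $S_{\ast}\subset S$ and the inserted digits in $U\setminus S_{\ast}\subset S$, and distinctness follows from $y\in E$, $S_{\ast}\cap(U\setminus S_{\ast})=\emptyset$, and the pairwise disjointness of the $W_k$. Your additional step identifying the digits of $x$ with the inserted sequence (and hence $x\notin\mathbb F_q(X)$) is taken for granted in the paper's definition of $x(y)$, so it is a harmless and welcome addition.
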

\begin{proof}[Proof of Claim]
Take $x\in E_{S,U}$, and let $y\in R_t(S_{\ast})$ be the corresponding point, namely $y=f(x)$, where $f$ is the elimination map. Since $y\in E$, its digits are pairwise distinct. Moreover, each inserted block $W_k$ is contained in $U\setminus S_{\ast}$
and the sets $W_k$ are pairwise disjoint. Hence, the inserted digits are pairwise distinct and are all disjoint from the seed digits from $S_{\ast}$. Hence, the
digits of $x$ are pairwise distinct, which implies $x\in E$. Finally, since 
digits from the seed set lie in $S_{\ast}\subset S$ and the inserted digits lie in $U\setminus S_{\ast}\subset S$, we have $x\in \{z\in E \colon A_n(z)\in S \text{ for all } n\ge 1\}.$
\end{proof}

Since \eqref{eq-sec4-3} and \eqref{eq-sec4-4} imply the
hypotheses \eqref{condi-Lip-1}, \eqref{condi-Lip-2} of Proposition~\ref{pro-almLip}, the
corresponding elimination map
\[f:E_{S,U}\to R_t(S_{\ast})\]
is almost Lipschitz. Hence, Lemma~\ref{Holder} and Proposition~\ref{seed-Prop} give
\[\dim_{\rm H} E_{S,U}\ge \dim_{\rm H} R_t(S_{\ast})=\frac{1}{2\alpha}.\]
Since $\deg A_n(x)\to \infty$ as $n\to \infty$ for any $x\in E$, by \eqref{hougan} and
Proposition~\ref{dim_Hirst},
\[\dim_{\rm H} E_{S,U}\le \dim_{\rm H}\{x\in E \colon A_n(x)\in S \text{ for all } n\ge 1\}\le\frac{1}{2\alpha}.\]
Hence,
\begin{equation}\label{eq-sec4-5}
\dim_{\rm H} E_{S,U}=\dim_{\rm H}\{x\in E\colon \{A_n(x)\colon n\in\mathbb N\}\subset S\}=\frac{1}{2\alpha}.
\end{equation}
We now verify the density statement. Put
\[D_{S,U}:=
\bigcup_{n\in\mathbb N}\ \bigcap_{x\in E_{S,U}}\{A_n(x)\}.\]
By construction, every element of each $W_k$ is inserted, in the same order and at the same prescribed position, into the digit sequence of every point of $E_{S,U}$. Hence, for each $k\ge 1,$
\[\bigcup_{j=1}^k W_j \subset D_{S,U}\cap U.\]
Using \eqref{eq-sec4-2}, for each $k\ge 1$ 
\[Q_{N_k}(U\setminus S_{\ast})=\bigcup_{j=1}^k W_j\subset D_{S,U}\cap U\]
and hence
\[\#Q_{N_k}(U\setminus S_{\ast})\le \#Q_{N_k}(D_{S,U}\cap U).\]
Dividing by $\#Q_{N_k}(S)$ and using \eqref{eq-sec4-1}, we obtain
\[\overline{d}_q(D_{S,U}\cap U | S)
\ge\limsup_{k\to\infty}\frac{\#Q_{N_k}(D_{S,U}\cap U)}{\#Q_{N_k}(S)}\ge
\lim_{k\to\infty}\frac{\#Q_{N_k}(U\setminus S_{\ast})}{\#Q_{N_k}(S)}=
\overline{d}_q(U| S).\]
Since $D_{S,U}\cap U\subset U$, the reverse inequality
\[\overline{d}_q(D_{S,U}\cap U | S)\le \overline{d}_q(U| S)\]
is obvious. Hence,
\begin{equation}\label{eq-sec4-6}
\overline{d}_q\left(
\bigcup_{n\in\mathbb N} \bigcap_{x\in E_{S,U}}\{A_n(x)\}\cap U \middle| S
\right)=\overline{d}_q(U| S).
\end{equation}
By construction of $U_{\deg}$ and $S_{\ast}\subset \tilde{S}=S\setminus U_{\deg}$, for any $d\in \deg(U)$ there exists $A\in U\setminus S_{\ast}$ such that $d=\deg(A),$ which implies
\begin{equation*}
\deg(U)\subset \deg(U\setminus S_{\ast})\subset \deg(D_{S, U}\cap U)\subset \deg(U).
\end{equation*}
This yields 
\begin{equation}
\label{denistydeg}
\overline{d}\left(
\bigcup_{n\in\mathbb N} \bigcap_{x\in E_{S,U}}\{\deg A_n(x)\}\cap \deg(U) \middle| \deg(S)
\right)=\overline{d}(\deg(U)| \deg(S)).
\end{equation}
Combining \eqref{eq-sec4-5},
\eqref{eq-sec4-6} and \eqref{denistydeg}, we complete the proof of Main Theorem.

%\subsection{Proof of Corollary~\ref{cor-FS}}

%Apply Main Theorem with
%$S=\mathbb F_q^1[X].$
%Since
%$\#Q_N(F_q^1[X]) = q^{N+1}-q,$
%the set $F_q^1[X]$ has growth density with exponent $\alpha=1$. Hence, for
%every subset $S\subset \mathbb F_q^1[X]$ with $\overline{d}_q(S)>0$, Main Theorem yields there exists a subset
%$E_S\subset E$ such that
%\[\dim_{\rm H} E_S=\dim_{\rm H} E=\frac{1}{2},\]
%\[\overline{d}_q\left(
%\bigcup_{n\in\mathbb N}\ \bigcap_{x\in E_S}\{A_n(x)\}\cap S
%\right)=\overline{d}_q(S),\]and 
%\begin{equation*}\overline{d}\left(\bigcup_{n\in\mathbb N}\bigcap_{x\in E_{S}}\{\deg A_n(x)\}\cap \deg(S)\right)=\overline{d}(\deg(S)),\end{equation*}
%which completes the proof of Corollary~\ref{cor-FS}.

\subsection*{Acknowledgments}  The author thanks Hiroki Takahasi for valuable comments. The author thanks  Kaoru Sano for a question that led to this study. The author thanks Reimi Irokawa for kind guidance on non-Archimedean dynamics. This research was supported by the JSPS KAKENHI 25K17282, Grant-in-Aid for Early-Career Scientists.


\begin{thebibliography}{99}

\bibitem{Ar}
E. Artin,
Quadratische K\"orper im Gebiete der H\"oheren Kongruenzen I--II,
{\it Math. Z.} {\bf 19} (1924), 153--246.
\bibitem{BLM}V. Bergelson, A. Leibman, and R. McCutcheon, Polynomial Szemerédi theorems for countable modules over integral domains and finite fields, {\it J. Analyse Math.} {\bf 95} (2005), no. 1, 243--296.
\bibitem{BN}
V. Berth\'e and H. Nakada,
On continued fraction expansions in positive characteristic: equivalence relations and some metric properties,
{\it Expo. Math.} {\bf 18} (2000), no. 4, 257--284.

\bibitem{Fal97}
K. Falconer,
{\it Techniques in Fractal Geometry},
John Wiley \& Sons, Ltd., Chichester, 1997.

\bibitem{Fal14}
K. Falconer,
{\it Fractal Geometry: Mathematical Foundations and Applications},
3rd ed., John Wiley \& Sons, Ltd., Chichester, 2014.

\bibitem{FSZ24}
Y. Feng, S. Shi, and Y. Zhang, Metrical properties for the weighted sums of degrees of
multiple partial quotients in continued fractions of Laurent series, {\it Finite Fields Appl.}
93 (2024), 102317.

\bibitem{Fur77}
H. Furstenberg,
Ergodic behavior of diagonal measures and a theorem of Szemer\'edi on arithmetic progressions,
{\it J. Analyse Math.} {\bf 31} (1977), 204--256.

\bibitem{Fur81}
H. Furstenberg,
{\it Recurrence in Ergodic Theory and Combinatorial Number Theory},
M. B. Porter Lectures, Princeton University Press, Princeton, NJ, 1981.

%\bibitem{FK78}
%H. Furstenberg and Y. Katznelson,
%An ergodic Szemer\'edi theorem for commuting transformations,
%{\it J. Analyse Math.} {\bf 34} (1978), 275--291.

\bibitem{GOQZ}
Z. Guo, K. Ouyang, J. Qiu, and S. Zhang,
Fractal transference principles for subsets of $\mathbb{N}^d$ of positive density,
{\it arXiv}:2601.14418.

\bibitem{G}
I. J. Good,
The fractional dimensional theory of continued fractions,
{\it Proc. Cambridge Philos. Soc.} {\bf 37} (1941), 199--228.

\bibitem{GT08}
B. Green and T. Tao,
The primes contain arbitrarily long arithmetic progressions,
{\it Ann. of Math.} {\bf 167} (2008), no. 2, 481--547.

\bibitem{HH}
D. Hu and X. Hu,
Arbitrarily long arithmetic progressions for continued fractions of Laurent series,
{\it Acta Math. Sci. Ser. B (Engl. Ed.)} {\bf 33} (2013), no. 4, 943--949.
\bibitem{HHY}
H. Hu, M. Hussain, and Y. Yu, Metrical properties for continued fractions of formal Laurent series, {\it Finite Fields Appl.} 73 (2021), 101850.
\bibitem{HS}
M. Hussain and N. Shulga, Hausdorff dimension for sets of continued fractions of formal Laurent series, {\it Finite Fields Appl.} 95 (2024), 102377.
\bibitem{HW}
X. Hu and J. Wu,
Continued fractions with sequences of partial quotients over the field of Laurent series,
{\it Acta Arith.} {\bf 136} (2009), no. 3, 201--211.

\bibitem{HWWY}
X. Hu, B. Wang, J. Wu, and Y. Yu,
Cantor sets determined by partial quotients of continued fractions of Laurent series,
{\it Finite Fields Appl.} {\bf 14} (2008), 417--437.

%\bibitem{Kr}
%S. Kristensen,
%On well-approximable matrices over a field of formal series,
%{\it Math. Proc. Cambridge Philos. Soc.} {\bf 135} (2003), 255--268.
\bibitem{Le}T. L\^e, 
Green--Tao theorem in function fields, {\it Acta Arith.} {\bf 147} (2011), no. 2, 129--152.
\bibitem{LN}R. Lidl and H. Niederreiter, Finite Fields, 2nd ed., Encyclopedia of
Mathematics and its Applications, Vol. 20, Cambridge University Press,
Cambridge, 1997.


\bibitem{NT}
Y. Nakajima and H. Takahasi,
Density combinatorics theorems in fractal dimension theory of continued fractions,
{\it Adv. Math.} {\bf 482} (2025), 110635.

\bibitem{NT2}
Y. Nakajima and H. Takahasi,
Multidimensional fractal transference principle for conformal iterated function systems,
in preparation.

\bibitem{NTW}
Y. Nakajima, H. Takahasi, and B. Wang,
Hausdorff dimension of sets of numbers whose continued fractions contain arbitrarily long arithmetic progressions,
{\it arXiv}:2601.12737.

\bibitem{Ni}
H. Niederreiter,
The probabilistic theory of linear complexity,
in {\it Advances in Cryptology---EUROCRYPT'88},
Lecture Notes in Comput. Sci., Vol. 330, Springer, New York, 1988, pp. 191--209.

%\bibitem{NV}
%H. Niederreiter and M. Vielhaber,
%Linear complexity profiles: Hausdorff dimensions for almost perfect profiles and measures for general profiles,
%{\it J. Complexity} {\bf 13} (1997), 353--383.

\bibitem{R85}
G. Ramharter,
Eine Bemerkung \"uber gewisse Nullmengen von Kettenbr\"uchen,
{\it Ann. Univ. Sci. Budapest. E\"otv\"os Sect. Math.} {\bf 28} (1985), 11--15.

\bibitem{S75}
E. Szemer\'edi,
On sets of integers containing no $k$ elements in arithmetic progression,
{\it Acta Arith.} {\bf 27} (1975), 199--245.

\bibitem{TW}
X. Tong and B. Wang,
How many points contain arithmetic progressions in their continued fraction expansion?
{\it Acta Arith.} {\bf 139} (2009), no. 4, 369--376.

\bibitem{We}A. Weingartner, On the degrees of polynomial divisors over finite fields,
{\it Math. Proc. Cambridge Philos. Soc.} {\bf 161} (2016), no. 3, 469–487.

\bibitem{Wae27}
B. L. van der Waerden,
Beweis einer Baudetschen Vermutung,
{\it Nieuw Arch. Wiskd.} {\bf 15} (1927), 212--216.

\bibitem{Wu}
J. Wu,
On the sum of degrees of digits occurring in continued fraction expansions of Laurent series,
{\it Math. Proc. Cambridge Philos. Soc.} {\bf 138} (2005), 9--20.

\bibitem{Wu2}
J. Wu,
Hausdorff dimensions of bounded type continued fraction sets of Laurent series,
{\it Finite Fields Appl.} {\bf 13} (2007), 20--30.

\bibitem{ZC2}
Z. Zhang and C. Cao,
On points with positive density of the digit sequence in infinite iterated function systems,
{\it J. Aust. Math. Soc.} {\bf 102} (2017), no. 3, 435--443.
 
 
 
 








\end{thebibliography}
\end{document}